\newtheorem{teo}{Theorem}[section]
\newtheorem{lemma}[teo]{Lemma}
\newtheorem{defi}[teo]{Definition}
\newtheorem{coro}[teo]{Corollary}
\newtheorem{cla}[teo]{Claim}
\newtheorem*{subclaim*}{Subclaim}
\newtheorem{prop}[teo]{Proposition}
\theoremstyle{remark}
\newtheorem{remark}[teo]{Remark}
\begin{document}

\newcommand{\ran}{{\rm ran}}
\newcommand{\cof}{{\rm cof}}
\newcommand{\dom}{{\rm dom}}
\newcommand{\I}{I}
\newcommand{\N}{\mathcal{N}}
\newcommand{\up}{\upharpoonright}
\newcommand{\rest}{\! \upharpoonright \!}
\newcommand{\urltilde}{\kern -.15em\lower .7ex\hbox{~}\kern .04em}

\newcommand\M{\mathcal{M}}
\newcommand\Acal{\mathscr{A}}
\newcommand\Bcal{\mathscr{B}}
\newcommand\Ccal{\mathscr{C}}
\newcommand\Dcal{\mathscr{D}}
\newcommand\Ecal{\mathscr{E}}
\newcommand\Fcal{\mathscr{F}}
\newcommand\Hcal{\mathscr{H}}
\newcommand\Ical{\mathscr{I}}
\newcommand\Ncal{\mathscr{N}}
\newcommand\Mcal{\mathscr{M}}
\newcommand\Pcal{\mathscr{P}}
\newcommand\Qcal{\mathscr{Q}}
\newcommand\calQ{\mathcal{Q}}
\newcommand\Rcal{\mathscr{R}}
\newcommand\Scal{\mathscr{S}}
\newcommand\Tcal{\mathscr{T}}
\newcommand\Ucal{\mathcal{U}}
\newcommand\Vcal{\mathscr{V}}
\newcommand\Zcal{\mathscr{Z}}
\newcommand\Rbb{\mathbb{R}}
\newcommand\Nfrak{\mathfrak{N}}
\newcommand\Pfrak{\mathfrak{P}}
\newcommand\restrict{\restriction}

\newcommand{\ORD}{{\rm ORD}}

\newcommand\Gdot{\dot{G}}
\newcommand\Qdot{\dot{Q}}

\newcommand{\diff}{\operatorname{\mathrm diff}}
\newcommand{\Ht}{\operatorname{\mathrm ht}}
\newcommand{\lev}{\operatorname{\mathrm lev}}
\newcommand{\meet}{\wedge}
\newcommand\triord{\triangleleft}
\newcommand{\Th}{{}^{\mathrm{th}}}
\newcommand{\St}{{}^{\mathrm{st}}}
\newcommand\axiom{\mathrm}
\newcommand\MA{\axiom{MA}}
\newcommand\MM{\axiom{MM}}
\newcommand\PFA{\axiom{PFA}}
\newcommand\BPFA{\axiom{BPFA}}
\newcommand\MRP{\axiom{MRP}}
\newcommand\SRP{\axiom{SRP}}
\newcommand\ZFC{\axiom{ZFC}}
\newcommand\CAT{\axiom{CAT}}
\newcommand\CH{\axiom{CH}}
\newcommand{\<}{\langle}
\renewcommand{\>}{\rangle}
\newcommand\mand{\textrm{ and }}
\renewcommand{\diamond}{\diamondsuit}
\newcommand{\forces}{\Vdash}

\newcommand{\cf}{{\mbox{cof}}}
\newcommand{\height}{\Ht}
\newcommand\NS{\mathrm{NS}}

\title[Iteration of semiproper forcing revisited]{Iteration of semiproper forcing revisited}

\author[Boban Veli\v{c}kovi\'{c}]{Boban Veli\v{c}kovi\'{c}}
%\revauthor{Veli\v{c}kovi\'c, Boban}
\email{boban@imj-prg.fr}
\urladdr{http://www.logique.jussieu.fr/\urltilde boban}
\address{Institut de Math\'ematiques de Jussieu, IMJ-PRG,
Universit\'e Paris Diderot,
75205 Paris Cedex 13
France}

\keywords{forcing, semiproper, side conditions}

\maketitle

\begin{abstract}{ We present a method for iterating semiproper forcing
which uses side conditions and is inspired by the technique recently
introduced by Neeman.}
\end{abstract}

\section*{Introduction}

One of the most important issues in the theory of forcing is whether certain classes
of nicely behaved forcing notions can be iterated while remaining in the same class.
We are primarily interested in preserving certain cardinals, but  often
we also wish to preserve some particular objects or properties of the ground model.
The theory of iterated forcing was developed largely by Shelah (see \cite{Sh_P}),
who isolated important classes of posets such as proper and semiproper
forcing and developed suitable iteration techniques for them.
In order to iterate proper forcing Shelah uses {\em countable support} iteration,
initially developed by Laver \cite{LaverBC}, Baumgartner \cite{BaumgartnerIF},
and others.  This theory is quite well understood and has been used in many
applications, most important of which is the consistency of the Proper Forcing Axiom (PFA),
obtained by Baumgartner \cite{BaumgartnerPFA} and Shelah \cite{Sh_P}.
In order to iterate semiproper forcing Shelah had to devise a new type of support,
called the {\em revised countable support}, and show that the iteration of semiproper forcing
notions using this support is also semiproper. The most spectacular application
of this theory is the relative consistency of Martin's Maximum (MM), which was
shown by Foreman, Magidor and Shelah \cite{FMS}.
However, the definition of revised countable support is quite intricate and,
despite several attempts such as \cite{Miyamoto} and \cite{Fuchs} to simplify it,
it still remains somewhat of a mystery.

The starting point for our work is a new iteration technique recently introduced by Neeman \cite{Neeman}
who used it to give an alternative proof of the consistency of PFA. Neeman's method
uses finite supports for the working part together with side conditions which are finite $\in$-chains of
elementary submodels of two types, countable and transitive. The side conditions also
have to be closed under intersection. Neeman's primary motivation was to generalize this type
of iteration in order to  obtain generalizations of PFA to higher cardinals.
However, even the basic properties of Neeman's technique can be used to obtain
new results or give simpler and unified proofs of known theorems, see e.g. \cite{VeVe}.

Now, it is quite natural to ask if Neeman's method can be adapted to iterate more general
classes of posets, such as semiproper forcing.
However, when attempting to do that one encounters serious difficulties.
The most important one comes from the fact that for non proper forcing the operations
of taking a generic extension and intersection of two models do not commute.
When attempting to resolve this difficulty we were lead to the notion of {\em virtual models}
which turns out to be more suitable in this context. The basic idea of virtual models is that they encode
the essential information relative to an initial segment of the iteration that is contained in
some actual models which do not yet appear as side conditions at that stage of the iteration.
The upshot of these considerations is that we now use only countable models and the operation
of taking the intersection is replaced by a better behaved operation of projection.
The added benefit is that we now get a true iteration, namely each stage of
the iteration is a complete suborder of all the later stages, which  was not the case
of Neeman's iteration.

The paper is organized as follows. In \S 1 we introduce the notion of virtual models
and establish some of their basic properties. In \S 2 we recall some  facts about
generic extensions and elementary submodels. Mostly we make definitions that will be relevant
for the iteration of semiproper forcing.
Finally, in \S 3 we present our iteration technique and show that if each iterand is semiproper
then so is the resulting forcing notion. If one wishes, one can then use this method to reprove
the consistency of MM or some other applications of revised countable support iteration.
The hope is that this technique is quite flexible and will allow us to obtain results that
cannot be proved by standard methods, in particular it may be possible to generalize it
for higher cardinal versions of semiproper forcing.
Finally, let us note that a related but somewhat different approach to iterating semiproper
forcing was proposed by Gitik and Magidor \cite{GM}.
%The reader is encouraged to compare the two methods.
In order to read this paper, only basic knowledge of iterated forcing is needed, such as
the one presented in \cite{JechST} or \cite{Kunen}. For all undefined terms we refer the reader
to one of these two monographs.

\section{Virtual models}

In this section we present the collection of models we shall use as side conditions.
We shall consider the language $\mathcal L$ obtained by adding a unary function
symbol $U$ to the standard language $\mathcal L_\epsilon$ of set theory.
Let us say that a structure $\mathcal A$ of the form $(A,\in,U^{\mathcal A})$ is \emph{admissible}
if $A$ is a transitive set, $U^{\mathcal A}$ is a function from $\ORD^A$ to $A$, and $\mathcal A$
satisfies $\ZFC$ in the expanded language $\mathcal L$.
We shall often abuse notation and refer to the structure $(A,\in,U^{\mathcal A})$ simply by $A$.
Suppose $\mathcal A$ is an admissible structure. If $\alpha$ is an ordinal in $A$,
we let $A_\alpha$ denote $A\cap V_\alpha$. Finally, we let
$$
E_{\mathcal A}=\{ \alpha \in A : (A_\alpha,\in,U^{\mathcal A}\! \restriction \! \alpha) \prec (A,\in,U^{\mathcal A})\}.
$$
Note that $E_{\mathcal A}$ is a closed, possibly empty, subset of $\ORD^A$. For $\alpha \in A$
we let ${\rm next}_{\mathcal A}(\alpha)$ be the least ordinal in $E_{\mathcal A}$ above $\alpha$,
if such an ordinal exists. Otherwise, we leave ${\rm next}_{\mathcal A}(\alpha)$ undefined.
We start with a simple technical lemma.

\begin{lemma}\label{E_A-cofinal} {Suppose $M$ is an elementary submodel of
an admissible structure $\mathcal A$.
Then $\sup ( E_{\mathcal A}\cap M) = \sup (E_{\mathcal A}\cap \sup (M\cap \ORD^A))$.}
\end{lemma}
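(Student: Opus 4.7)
Setting $\delta := \sup(M \cap \ORD^A)$, my plan is to prove the two inequalities separately. The direction $\sup(E_{\mathcal A} \cap M) \leq \sup(E_{\mathcal A} \cap \delta)$ will be routine: since $M$ is closed under the successor map on ordinals, $\delta$ cannot itself lie in $M$ (otherwise $\delta+1 \in M \cap \ORD^A$ would contradict the definition of $\delta$), so $M \cap \ORD^A \subseteq \delta$, and hence $E_{\mathcal A} \cap M \subseteq E_{\mathcal A} \cap \delta$.

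The reverse inequality is the substantive part. I will fix $\gamma \in E_{\mathcal A} \cap \delta$ and choose $\eta \in M \cap \ORD^A$ with $\eta \geq \gamma$ (which exists by definition of $\delta$); it suffices to exhibit a member of $E_{\mathcal A} \cap M$ that is $\geq \gamma$. The central obstacle is that $E_{\mathcal A}$ is not first-order definable in $\mathcal A$, so the elementarity of $M$ cannot be invoked directly to place a specific element of $E_{\mathcal A}$ inside $M$. I will replace $E_{\mathcal A}$ by its $\Sigma_n$-approximants $C_n := \{\alpha : (A_\alpha, \in, U^{\mathcal A} \restriction \alpha) \prec_n (A, \in, U^{\mathcal A})\}$ for $n < \omega$: for each fixed metatheoretic $n$, $C_n$ is a definable club in $\mathcal A$ by way of the usual partial truth predicate for $\Sigma_n$ formulas, the $C_n$ nest decreasingly, and $E_{\mathcal A} = \bigcap_n C_n$.

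The finishing move will be to set $\beta_n := \max(C_n \cap (\eta+1))$; this maximum exists because $\gamma \in C_n \cap (\eta+1)$ and $C_n$ is closed. Since the defining formula has parameter $\eta \in M$ and is first-order in $\mathcal A$ for each fixed $n$, elementarity places $\beta_n$ in $M$, and $\beta_n \geq \gamma$ by construction. The nesting $C_{n+1} \subseteq C_n$ makes $(\beta_n)_{n<\omega}$ weakly decreasing, so by well-foundedness of the ordinals the sequence stabilizes at some value $\beta_N$; this value lies in $C_n$ for all $n \geq N$ directly and for $n < N$ via $C_N \subseteq C_n$, hence in $E_{\mathcal A} = \bigcap_n C_n$. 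Combined with $\beta_N \in M$ and $\beta_N \geq \gamma$, this will yield $\sup(E_{\mathcal A} \cap M) \geq \gamma$, completing the argument. The step I expect to need the most care is articulating that each $C_n$, for a specific metatheoretic $n$, is first-order definable in $\mathcal A$ even though $E_{\mathcal A}$ itself is not, and then cashing in the external intersection $\bigcap_n C_n$ for a single definable element via the well-foundedness of the ordinals.
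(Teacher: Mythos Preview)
Your argument is correct, but it takes a genuinely different route from the paper's. The paper proceeds directly: given $\beta \in E_{\mathcal A}$ below $\sup(M\cap\ORD^A)$, it lets $\gamma$ be the least ordinal of $M$ with $\gamma\geq\beta$ and shows $A_\gamma\prec\mathcal A$ by the Tarski--Vaught test. The key observation is that any potential counterexample tuple $\bar x\in A_\gamma\cap M$ has rank in $M$ and below $\gamma$, hence below $\beta$; so $\bar x\in A_\beta$, and since $A_\beta\prec\mathcal A$ a witness exists in $A_\beta\subseteq A_\gamma$. This is a two-line argument once the right $\gamma$ is chosen, and it gives the slightly sharper conclusion that the least ordinal of $M$ above any point of $E_{\mathcal A}$ is again in $E_{\mathcal A}$.

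Your approach replaces this by the $\Sigma_n$-approximants $C_n$ and a stabilization-by-well-foundedness argument. This works, but it buys the result at the cost of invoking more machinery: definability of the partial truth predicates in $\mathcal A$, closure of each $C_n$ (essentially the Reflection Theorem inside $\mathcal A$), and the elementary-chain argument for $\Sigma_n$-elementarity. None of this is wrong, but it is heavier than what is needed. The paper's proof sidesteps the non-definability of $E_{\mathcal A}$ not by approximating it, but by never needing to refer to $E_{\mathcal A}$ inside $M$ at all --- the ordinal $\gamma$ is located in $M$ purely as ``least ordinal $\geq\beta$'', and its membership in $E_{\mathcal A}$ is verified externally.
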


\begin{proof} Suppose $\beta \in E_{\mathcal A}$ and $(M\cap \ORD^A)\setminus \beta$ is
nonempty. Let $\gamma$ be the least ordinal in $M\setminus \beta$.
 We show that $A_\gamma$ is an elementary submodel
of $\mathcal A$. Suppose otherwise, then by the Tarski-Vaught criterion,
 there is a tuple $\bar{x}\in A_\gamma$
and a formula $\varphi (y,\bar{x})$ such that $\mathcal A \models \exists y \varphi(y,\bar{x})$,
but there is no $y\in A_\gamma$ such that $\mathcal A \models \exists y \varphi(y,\bar{x})$.
Since $\gamma \in M$ and $M$ is an elementary submodel of $\mathcal A$,
there is such a tuple $\bar{x}\in A_\gamma\cap M$.
Now, $\gamma$ is the least ordinal in $M$ above $\beta$,
therefore $\bar{x}\in M\cap A_\beta$.
Since $A_\beta$ is an elementary submodel of $\mathcal A$,
there is $y'\in A_\beta$ witnessing that $A_\beta \models \varphi(y',\bar{x})$
and so $A \models \varphi(y',\bar{x})$.
Since $\beta \leq \gamma$, it follows that $y'\in A_\gamma$, a contradiction.
\end{proof}

\begin{def}\label{M(X)-definition} Suppose $M$ is an elementary submodel of an admissible
structure $\mathcal A$ and $X$ is a subset of $A$. Let
$$
{\rm Hull}(M,X) =\{ f(\bar{x}) : f\in M, \bar{x}\in X^{<\omega}, f \mbox{ is a function, and } \bar{x}\in \dom (f)\}.
$$
\end{def}

\begin{lemma}\label{Hull(M,X)-elementary} Suppose $\mathcal A$ is an admissible structure,
$M$ is an elementary submodel of $\mathcal A$ and $X$ is a subset of $A$.
Let $\delta$ be $\sup (M\cap \ORD)$ and suppose $X\cap A_\delta$ is nonempty.
Then ${\rm Hull}(M,X)$ is the least elementary submodel of $\mathcal A$ containing $M$ and $X\cap A_\delta$.
\end{lemma}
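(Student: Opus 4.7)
The plan is to prove three assertions in sequence, from which the lemma follows at once: (i) $\mathrm{Hull}(M,X)$ contains $M\cup(X\cap A_{\delta})$; (ii) any elementary submodel of $\mathcal{A}$ containing these sets automatically contains $\mathrm{Hull}(M,X)$; and (iii) $\mathrm{Hull}(M,X)$ is itself an elementary submodel of $\mathcal{A}$. Assertions (i) and (ii) amount to bookkeeping, while (iii) will be the hard part.

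For (i), I would first fix some $x_{0}\in X\cap A_{\delta}$, which is available by hypothesis, and, using $\delta=\sup(M\cap\ORD^{A})$, locate an ordinal $\alpha\in M\cap\ORD^{A}$ with $x_{0}\in A_{\alpha}$; elementarity gives $A_{\alpha}\in M$. For any $m\in M$, the constant function $c\colon A_{\alpha}\to\{m\}$ is definable in $\mathcal{A}$ from $A_{\alpha}$ and $m$, so $c\in M$ and $m=c(x_{0})\in\mathrm{Hull}(M,X)$. An analogous argument using $\mathrm{id}_{A_{\alpha}}\in M$ handles $X\cap A_{\delta}\subseteq\mathrm{Hull}(M,X)$.

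For (ii), I would argue by rank. Suppose $N\prec\mathcal{A}$ contains $M\cup(X\cap A_{\delta})$, and take $f\in M$ with $\bar{x}\in X^{<\omega}\cap\dom(f)$. Elementarity yields $\mathrm{rank}(f)=\zeta\in M\cap\ORD^{A}$, so $\zeta\le\delta$ and $\dom(f)\subseteq A_{\zeta}\subseteq A_{\delta}$. Since $A_{\delta}$ is transitive, every component of $\bar{x}$ lies in $X\cap A_{\delta}\subseteq N$; combined with $f\in M\subseteq N$, the elementarity of $N$ gives $f(\bar{x})\in N$.

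For (iii), the main obstacle, I would apply the Tarski--Vaught criterion. Given $\varphi(y,\bar{z})$ in $\mathcal{L}$ and parameters $a_{i}=f_{i}(\bar{x}_{i})\in\mathrm{Hull}(M,X)$ for $1\le i\le n$ with $\mathcal{A}\models\exists y\,\varphi(y,\bar{a})$, I would invoke $U^{\mathcal{A}}$: because $\mathcal{A}$ satisfies $\ZFC$ in the expanded language $\mathcal{L}$, the symbol $U^{\mathcal{A}}$ provides $\mathcal{L}$-definable Skolem functions, yielding an $F_{\varphi}$ that picks a witness uniformly. The composition $g(\bar{y}_{1},\ldots,\bar{y}_{n}):=F_{\varphi}(f_{1}(\bar{y}_{1}),\ldots,f_{n}(\bar{y}_{n}))$ is then $\mathcal{L}$-definable from $f_{1},\ldots,f_{n}\in M$, so $g\in M$; the concatenation $\bar{x}_{1}{}^{\frown}\cdots{}^{\frown}\bar{x}_{n}$ lies in $X^{<\omega}\cap\dom(g)$, and $g$ applied to it is the required witness in $\mathrm{Hull}(M,X)$. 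The delicate point---and the reason admissible structures are defined with $U$ in the language---is precisely the need for definable Skolemization; without it, a set of the form $\{f(\bar{x}):\ldots\}$ need not be closed under existential witnesses.
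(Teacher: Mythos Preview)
Your overall structure---show $M\cup(X\cap A_\delta)\subseteq{\rm Hull}(M,X)$, show minimality, then verify Tarski--Vaught---matches the paper's proof, and your arguments for (i) and (ii) are fine (the paper in fact dismisses minimality as ``clear from the definition,'' so your rank argument is more careful than necessary). The problem lies in (iii).

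Your claim that $U^{\mathcal A}$ ``provides $\mathcal L$-definable Skolem functions'' is not justified by the definition of admissible structure. Look again: $U^{\mathcal A}$ is merely \emph{some} function from $\ORD^A$ to $A$, with no further requirements---it could be constantly $\emptyset$. Nothing in the definition forces it to encode a global well-ordering or to yield definable Skolemization. (Its actual role in the paper is as a bookkeeping device carrying the iterands $\dot{\mathbb Q}_\alpha$; see the discussion after Definition~\ref{A-def}.) So the existence of your definable $F_\varphi$ is unsupported, and your final paragraph's diagnosis of why $U$ is in the language is mistaken.

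The paper's route through (iii) avoids this entirely: given $f_1,\ldots,f_n\in M$ with domains $D_1,\ldots,D_n$, the statement ``there is a function $g$ on $D_1\times\cdots\times D_n$ such that whenever $\mathcal A\models\exists u\,\varphi(u,f_1(\bar y_1),\ldots,f_n(\bar y_n))$, $g(\bar y_1,\ldots,\bar y_n)$ is such a $u$'' is first-order with parameters in $M$ and is true in $\mathcal A$ by the axiom of choice (and replacement) there. Elementarity of $M$ then gives such a $g\in M$, and $a=g(\bar x_1,\ldots,\bar x_n)\in{\rm Hull}(M,X)$ is the desired witness. No definable choice is needed---only that \emph{some} choice function exists, which elementarity then reflects into $M$.
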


\begin{proof} For each $\gamma \in A$, let ${\rm id}_\gamma$ be the identity function
on $A_\gamma$. Clearly, if $\gamma \in M$ then ${\rm id}_\gamma \in M$. Therefore,
$X\cap A_\delta$ is a subset of ${\rm Hull}(M,X)$. Let $\gamma \in M$ be such that $X\cap A_\gamma$ is nonempty.
For each $z\in M$, the constant function $c_z$ defined on $A_\gamma$ is in $M$,
therefore $M$ is a subset of ${\rm Hull}(M,X)$. The minimality of ${\rm Hull}(M,X)$ is clear from the definition.
It remains to show that ${\rm Hull}(M,X)$ is an elementary submodel of $A$.
We check the Tarski-Vaught criterion for ${\rm Hull}(M,X)$ and $A$. Let $\varphi$ be a formula and
$a_1,\ldots,a_n \in {\rm Hull}(M,X)$ such that $A\models \exists u \varphi(u,a_1,\ldots,a_n)$.
Then we can find functions $f_1,\ldots, f_n\in M$ and tuples $\bar{x}_1,\ldots,\bar{x}_n \in X^{<\omega}$
such that $a_i=f_i(\bar{x}_i)$, for all $i$. If $D_i$ is the domain of $f_i$, this implies that $\bar{x}_i\in D_i$.
By regularity and the axiom of choice in $A$ we can find a function $g$ defined on $D_1 \times \ldots \times D_n$
such that for every $\bar{y}_1\in D_1,\ldots,\bar{y}_n \in D_n$, if there is $u$ such that
$A\models \varphi (u, f_1(\bar{y}_1),\ldots,f_n(\bar{y}_n))$ then
$g(\bar{y}_1,\ldots,\bar{y}_n)$ is such a $u$. Moreover, by elementarity of $M$,
we may assume that $g\in M$. Let $a =g(\bar{x}_1,\ldots,\bar{x}_n)$.
It follows that $a\in {\rm Hull}(M,X)$ and $A \models \varphi (a,a_1,\ldots,a_n)$.
Therefore, ${\rm Hull}(M,X)$ is an elementary submodel of $A$.
\end{proof}

Now, let us fix an inaccessible cardinal $\kappa$ and a function $U^{\mathcal V_\kappa}:\kappa \rightarrow V_\kappa$.
Let $\mathcal V_\kappa= (V_\kappa,\in,U^{\mathcal V_\kappa})$. Clearly, $\mathcal V_\kappa$ is admissible.
We shall abuse notation and write simply $V_\kappa$ instead of $\mathcal V_\kappa$ and
$U$ instead of $U^{\mathcal V_\kappa}$.  We shall also write
 $E$ instead of $E_{\mathcal V_\kappa}$ and ${\rm next}(\alpha)$ instead of ${\rm next}_{\mathcal V_\kappa}(\alpha)$.

\begin{defi}\label{A-def} Suppose $\alpha \in E$. We let ${\mathscr A}_\alpha$ denote the set of all
admissible structures $\mathcal A$ that are elementary end extensions of
$(V_\alpha,\in,U\! \restriction \! \alpha)$ and have the same cardinality as $V_\alpha$.
\end{defi}

Note that if $A \in \mathscr A_\alpha$ and $\alpha \in A$ then $E_{\mathcal A} \cap (\alpha +1) =E\cap (\alpha+1)$.
Of course, $E_{\mathcal A}$ may have some elements above $\alpha$ which are not in $E$.
Another important point is that $\mathscr A_\alpha$ is uniformly definable in $\mathcal V_\kappa$
with parameter $\alpha$.  Our plan is to define an iteration $(\mathbb{P}_\alpha : \alpha \in E^*)$,
where $E^*= E \cup \{ \alpha+1: \alpha \in E\}$. Once we have defined $\mathbb P_\alpha$,
we will define $\mathbb P_{\alpha +1}$ to be essentially $\mathbb P_\alpha \ast \dot{\mathbb Q}_\alpha$,
where $\dot{\mathbb Q}_\alpha$ is the $\mathbb P_\alpha$-name for a semiproper forcing given
by $U(\alpha)$.
We will have that the initial segment of the iteration,
$(\mathbb{P}_\gamma : \gamma \in E^*\cap (\alpha +1))$, is uniformly definable
in  $V_\kappa$ with parameter $\alpha$ and $U\rest \alpha$, for every $\alpha \in E$.
Now, if $\alpha \in E$ and $\mathcal A \in {\mathscr A}_\alpha$, the same definition can be applied in
$\mathcal A$ and we get an iteration $(\mathbb{P}^{\mathcal A}_\gamma: \gamma \in E_{\mathcal A})$.
By elementarity, we will then have that $\mathbb{P}_\gamma^{\mathcal A}=\mathbb{P}_\gamma$,
for every $\gamma \in E\cap \alpha$.

We are now ready to define the collection of models that we plan use as side conditions
in our iteration.

\begin{defi}\label{C-def} Suppose $\alpha \in E$. We let $\mathscr C_\alpha$ denote
the collection of all countable submodels $M$ of $V_\kappa$ such that, if we let $A={\rm Hull}(M,V_\alpha)$,
then $A$ is transitive and there exists a function $U^{\mathcal A}$  such that
the structure ${\mathcal A}=(A,\in,U^{\mathcal A})$ belongs to $\mathscr A_\alpha$.
We refer to the members of $\mathscr C_\alpha$ as the $\alpha$-{\em models}.
We write  $\mathscr C_{<\alpha}$ for $\bigcup \{ \mathscr C_\gamma : \gamma \in E\cap \alpha\}$ and
$\mathscr C_{\leq \alpha}$ for ${\mathscr C}_{< \alpha} \cup {\mathscr C}_\alpha$.
We write $\mathscr C_{\geq \alpha}$ for $\bigcup \{ \mathscr C_\gamma : \gamma \in E\setminus \alpha\}$.
Finally, we let $\mathscr C$ denote $\mathscr C_{< \kappa}$.
\end{defi}

\begin{remark} Note that if $M \in \mathscr C_\alpha$ then $\sup(M \cap \ORD)\geq \alpha$.
In general, $M$ is not elementary in $V_\kappa$, in fact, this only happens if $M \subseteq V_\alpha$.
Finally, note that we are not requiring that the function $U^{\mathcal A}$  be unique.
However, we will have that $U^{\mathcal A}\! \restriction \! \alpha = U \! \restriction \! \alpha$, and
that is all we care about.
\end{remark}

We plan to use members of $\mathscr C_{\leq \alpha}$ as side conditions in the forcing $\mathbb P_\alpha$.
This will guarantee that $\mathbb P_\alpha$ is small, i.e. has size less than ${\rm next}(\alpha)$.
We will use $\alpha$-models to control the working parts of conditions below $\alpha$.
Now, we need to ensure that $\mathbb P_\alpha$ is a complete suborder of $\mathbb P_\beta$,
for $\alpha < \beta$. So, if $M$ is a $\beta$-model appearing in some condition from $\mathbb P_\beta$,
we need to find an $\alpha$-model $N$ which has the same impact as $M$ on the iteration
up to $\alpha$. This motivates the following definition.

\begin{defi}\label{alpha-equiv-def} Suppose $M,N\in \mathscr C$ and $\alpha \in E$.
We say that $M$ and $N$ are $\alpha$-{\em isomorphic} and write $M\cong_{\alpha}N$ if
there is an isomorphism $\sigma$ between ${\rm Hull}(M,V_\alpha)$ and ${\rm Hull}(N,V_\alpha)$ such that
$\sigma [M]=N$. Of course, if such a $\sigma$ exists, it is unique.
\end{defi}

Clearly, $\cong_{\alpha}$ is an equivalence relation, for every $\alpha$. Note that if $M \in \mathscr C_\gamma$,
for some $\gamma < \alpha$, then the only model $\alpha$-isomorphic to $M$ is $M$ itself.
Suppose  $\alpha,\beta \in E$ and $\alpha \leq \beta$.
It is easy to see that, if $M,N\in \mathscr C$ are $\beta$-isomorphic,
then they are $\alpha$-isomorphic.  We will now see that, if $\alpha <\beta$, then
for every $\beta$-model $M$ there is a canonical representative of the $\cong_\alpha$-equivalence
class of $M$ which is an $\alpha$-model.

\begin{defi}\label{projection-models-def} Suppose $\alpha$ and $\beta$ are members of  $E$ with $\alpha \leq \beta$,
and $M$ be a $\beta$-model. Let $\overline{{\rm Hull}(M,V_\alpha)}$ be the transitive collapse of
${\rm Hull}(M,V_\alpha)$ and let $\pi$ be the collapsing map. We define  $M \! \restriction \! \alpha$ to be $\pi[M]$,
i.e. the image of $M$ under the collapsing map of ${\rm Hull}(M,V_\alpha)$.
\end{defi}

Note that $\overline{{\rm Hull}(M,V_\alpha)}$ belongs to $\mathscr A_\alpha$, so $M\! \restriction \! \alpha$ is
an $\alpha$-model which is $\alpha$-isomorphic to $M$. Note also that if $\beta=\alpha$ then
$M\! \restriction \! \alpha =M$, since ${\rm Hull}(M,V_\alpha)$ is already transitive.
The following is straightforward.

\begin{prop}\label{transitivity-projections} Suppose $\alpha,\beta,\gamma \in E$ with
$\alpha \leq \beta \leq \gamma$. Let $M\in \mathscr C_{\gamma}$. Then
$(M \! \restriction \! \beta)\! \restriction \! \alpha = M\! \restriction \! \alpha$.
\qed
\end{prop}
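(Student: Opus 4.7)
The plan is to reduce the equality to the uniqueness of the Mostowski collapse. Let me write $H_\alpha = {\rm Hull}(M,V_\alpha)$ and $H_\beta = {\rm Hull}(M,V_\beta)$, and let $\pi_\alpha\colon H_\alpha \to \overline{H_\alpha}$ and $\pi_\beta\colon H_\beta \to \overline{H_\beta}$ denote the respective collapsing maps. Since $V_\alpha \subseteq V_\beta$, we have $H_\alpha \subseteq H_\beta$, so it makes sense to consider $\pi_\beta \! \restriction \! H_\alpha$. By Definition \ref{projection-models-def}, the proposition amounts to checking that if $\pi'_\alpha$ is the collapsing map of ${\rm Hull}(M\! \restriction \!\beta,V_\alpha)$, then $\pi'_\alpha \circ (\pi_\beta \! \restriction \! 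M) = \pi_\alpha \! \restriction \! M$.

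The first key step is to identify $\pi_\beta[H_\alpha]$. Since $\alpha \leq \beta$ and $V_\alpha$ is transitive, the collapse $\pi_\beta$ is the identity on $V_\alpha$. Because $\pi_\beta$ is an isomorphism sending $M$ to $M\! \restriction \!\beta$ and fixing $V_\alpha$ pointwise, an element $f(\bar x)$ with $f \in M$ and $\bar x \in V_\alpha^{<\omega}$ is mapped to $\pi_\beta(f)(\bar x)$, which is a typical element of ${\rm Hull}(M\! \restriction \!\beta, V_\alpha)$. Hence
\[
\pi_\beta[H_\alpha] \;=\; {\rm Hull}(M\! \restriction \!\beta,V_\alpha),
\]
and $\pi_\beta \! \restriction \! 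H_\alpha$ is an $\in$-isomorphism from $H_\alpha$ onto this hull.

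The second step is then immediate: $\pi'_\alpha \circ (\pi_\beta \! \restriction \! H_\alpha)$ is an $\in$-isomorphism from $H_\alpha$ onto the transitive set $\overline{{\rm Hull}(M\! \restriction \!\beta,V_\alpha)}$. By the uniqueness of the Mostowski collapse of $H_\alpha$, this composition must coincide with $\pi_\alpha$, and in particular $\overline{{\rm Hull}(M\! \restriction \!\beta,V_\alpha)} = \overline{H_\alpha}$. Restricting to $M$ (which is contained in $H_\alpha$ by Lemma \ref{Hull(M,X)-elementary}, since $\sup(M \cap \ORD) \geq \gamma \geq \alpha$), we get
\[
(M\! \restriction \!\beta)\! \restriction \!\alpha \;=\; \pi'_\alpha[\pi_\beta[M]] \;=\; \pi_\alpha[M] \;=\; M\! \restriction \!\alpha,
\]
as required.

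There is no real obstacle here; the only subtlety is verifying that $\pi_\beta$ really does send $H_\alpha$ onto ${\rm Hull}(M\! \restriction \!\beta,V_\alpha)$, which rests on the transitivity of $V_\alpha$ (so that $\pi_\beta$ fixes $V_\alpha$) and on the compatibility of the hull operation with isomorphisms. Once these are in place, the uniqueness of the transitive collapse does all the work.
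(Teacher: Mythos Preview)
Your argument is correct: the key observations are that $\pi_\beta$ fixes $V_\alpha$ pointwise (since $V_\alpha\subseteq V_\beta\subseteq H_\beta$ is transitive), that consequently $\pi_\beta[{\rm Hull}(M,V_\alpha)]={\rm Hull}(M\!\restriction\!\beta,V_\alpha)$, and that the uniqueness of the Mostowski collapse forces $\pi'_\alpha\circ(\pi_\beta\!\restriction\! H_\alpha)=\pi_\alpha$. The paper does not give a proof at all---it simply states the proposition as ``straightforward'' and closes with a \qed---so your write-up is exactly the kind of routine unpacking the authors had in mind.
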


We also need to define a version of the membership relation, for every $\alpha$ in $E$.

\begin{defi}\label{membership-alpha-def} Suppose $M,N \in \mathscr C$. We let $M\in_\alpha N$
if there is $M'\in N$ such that $M'\cong_\alpha M$.
\end{defi}

Note that if $M\subseteq V_\alpha$ this simply means that $M\in N$. However, in general,
we may have $M \in_\alpha N$ even if the rank of $M$ is higher than the rank of $N$.
Note that $\in_\alpha$ is transitive on members of $\mathscr C$. We will need
the following simple fact.

\begin{prop}\label{projection-membership} Let $\alpha,\beta \in E$ with $\alpha \leq \beta$.
Suppose $M$ and $N$ are $\beta$-models and $M\in_\beta N$.
Then $M\! \restriction \! \alpha \in_\alpha N\! \restriction \! \alpha$.
\qed
\end{prop}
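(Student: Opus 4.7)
The plan is to use the given witness $M'$ and push it through the Mostowski collapse of ${\rm Hull}(N, V_\alpha)$. Unfolding $M \in_\beta N$ yields $M' \in N$ with $M' \cong_\beta M$, via an isomorphism $\sigma: {\rm Hull}(M, V_\beta) \to {\rm Hull}(M', V_\beta)$ sending $M$ onto $M'$. Because $V_\alpha \subseteq V_\beta$ and transitive collapses fix transitive subsets, $\sigma$ restricts to an $\alpha$-isomorphism $\tau: {\rm Hull}(M, V_\alpha) \to {\rm Hull}(M', V_\alpha)$ with $\tau[M] = M'$; this is the observation, recorded just after Definition \ref{alpha-equiv-def}, that $\cong_\beta$ refines to $\cong_\alpha$.

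Next I would let $\pi_N: {\rm Hull}(N, V_\alpha) \to \bar A_N$ be the Mostowski collapse, so that $N \! \restriction \! \alpha = \pi_N[N]$. Since $M'$ is countable and $M' \in N \prec V_\kappa$, standard elementarity gives $M' \subseteq N$, whence ${\rm Hull}(M', V_\alpha) \subseteq {\rm Hull}(N, V_\alpha)$ and $\pi_N(M') = \pi_N[M']$ is defined. Set $M'' := \pi_N(M') \in \pi_N[N] = N \! \restriction \! \alpha$. Because $\pi_N$ is the identity on $V_\alpha$, a direct computation from the definition of ${\rm Hull}$ yields $\pi_N[{\rm Hull}(M', V_\alpha)] = {\rm Hull}(M'', V_\alpha)$, and $\pi_N$ restricts to an $\in$-isomorphism between these hulls sending $M'$ to $M''$. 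Composing with $\tau$ and with $\pi_M^{-1}$, where $\pi_M: {\rm Hull}(M, V_\alpha) \to {\rm Hull}(M \! \restriction \! \alpha, V_\alpha)$ is the collapse defining $M \! \restriction \! \alpha$, produces an $\in$-isomorphism ${\rm Hull}(M \! \restriction \! \alpha, V_\alpha) \to {\rm Hull}(M'', V_\alpha)$ that carries $M \! \restriction \! \alpha$ to $M''$.

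The main remaining obstacle is to verify that $M'' \in \mathscr C_\alpha$, i.e.\ that ${\rm Hull}(M'', V_\alpha)$ is actually transitive; without this the map constructed above does not yet formally witness $M'' \cong_\alpha M \! \restriction \! \alpha$ in the sense of Definition \ref{alpha-equiv-def}. Since ${\rm Hull}(M'', V_\alpha)$ is $\in$-isomorphic to the transitive ${\rm Hull}(M \! \restriction \! \alpha, V_\alpha)$ and is contained in the transitive $\bar A_N$, uniqueness of the Mostowski collapse would then force ${\rm Hull}(M'', V_\alpha)$ to coincide with ${\rm Hull}(M \! \restriction \! \alpha, V_\alpha)$. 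The content of this step is showing that the restriction of $\pi_N$ to ${\rm Hull}(M', V_\alpha)$ satisfies the recursion defining the Mostowski collapse of ${\rm Hull}(M', V_\alpha)$, equivalently that ${\rm Hull}(M', V_\alpha)$ is closed under taking $\in$-members inside ${\rm Hull}(N, V_\alpha)$. I would extract this closure from the minimality clause of Lemma \ref{Hull(M,X)-elementary}, applied to $M'$ viewed as an elementary submodel (via $\tau$) of an admissible structure containing $V_\alpha$.
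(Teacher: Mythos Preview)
The paper records this proposition with a bare \qed, so you have supplied considerably more than the author does. Your overall plan---take the witness $M'\in N$ for $M\in_\beta N$, push it through the collapse $\pi_N$ of ${\rm Hull}(N,V_\alpha)$, and check that $M'':=\pi_N(M')$ witnesses $M\!\restriction\!\alpha \in_\alpha N\!\restriction\!\alpha$---is exactly the natural unpacking, and is essentially correct.

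There is, however, one genuine slip. You justify $M'\subseteq N$ by writing ``$M'\in N\prec V_\kappa$,'' but a $\beta$-model $N$ is in general \emph{not} elementary in $V_\kappa$; the remark immediately after Definition~\ref{C-def} says so explicitly. The elementarity you actually have is $N\prec A_N:={\rm Hull}(N,V_\beta)$, which is transitive, satisfies $\ZFC$, and end-extends $V_\beta$. To repair the step, note that since $\omega_1\in V_\beta$ and $A_N$ end-extends $V_\beta$, $A_N$ computes $\omega_1$ correctly; hence any set that $A_N$ deems uncountable really is uncountable. As $M'$ is countable in $V$, this forces $A_N\models |M'|\le\aleph_0$, and then elementarity of $N$ in $A_N$ gives an enumeration of $M'$ inside $N$, so $M'\subseteq N$ as you need. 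The rest of your second paragraph then goes through: ${\rm Hull}(M',V_\alpha)\subseteq{\rm Hull}(N,V_\alpha)$, $\pi_N(M')=\pi_N[M']$, and $\pi_N$ restricts to an isomorphism ${\rm Hull}(M',V_\alpha)\to{\rm Hull}(M'',V_\alpha)$ fixing $V_\alpha$.

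Your final paragraph correctly isolates the one remaining point that is not entirely formal: verifying that ${\rm Hull}(M'',V_\alpha)$ is transitive, so that $M''\in\mathscr C_\alpha$ and the relation $\cong_\alpha$ (which Definition~\ref{alpha-equiv-def} restricts to members of $\mathscr C$) applies. Your reduction to the closure statement ``for $x\in{\rm Hull}(M',V_\alpha)$, $x\cap{\rm Hull}(N,V_\alpha)\subseteq{\rm Hull}(M',V_\alpha)$'' is the right diagnosis, but the appeal to Lemma~\ref{Hull(M,X)-elementary} alone does not obviously deliver it, since that lemma requires $M'$ to be elementary in the ambient admissible structure, and $M'$ need not be elementary in $A_N$. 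One way through is to transport the question along the $\beta$-isomorphism: $\sigma^{-1}$ identifies ${\rm Hull}(M',V_\alpha)$ with ${\rm Hull}(M,V_\alpha)\prec A_M$, and the latter, being an elementary end-extension of $V_\alpha$ inside the transitive $A_M$, is easily seen to satisfy the needed closure. The paper, for its part, sweeps all of this under the \qed.
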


\section{The scaffolding}

In this section we recall some elementary facts about iterated forcing and make some definitions
that will be relevant for our iteration.
Let us fix a transitive model $(A,\in, \ldots)$ of $\ZFC$, possibly with some additional structure,
and a forcing notion ${\mathbb Q}\in A$.
Then $A^{\mathbb Q}$ denotes the class of all $\mathbb Q$-names, as defined in $A$.
If $G$ is an $A$-generic filter over ${\mathbb Q}$, we define the interpretation
of ${\mathbb Q}$-names by $G$ by $\in^*$-recursion as follows:
$$
\tau_{G}= \{ \sigma_{G} : \mbox{there is } p \in G \mbox{ such that } (p,\sigma)\in \tau\}.
$$
The generic extension $A[G]$ is equal to  $\{ \tau_{G}: \tau \in A^{\mathbb Q}\}$.
Fix now an elementary submodel $M$ of $A$ with ${\mathbb Q}\in M$. We do not assume
that $M$ belongs to $A$ or even $A[G]$.
We let
$$
M[G] = \{ \tau_{G} : \tau \mbox{ is a ${\mathbb Q}$-name and $\tau \in M$}\}.
$$
\noindent
Then $M[G]$ is an elementary submodel of $A[G]$. The following
definition is non standard.

\begin{defi}\label{M(G)-def}
Let $M(G)$ denote the trace of $M[G]$ on $A$, i.e.  $M[G] \cap A$.
%We let $M(\dot{G})$ denote the canonical $\mathbb Q$-name for $M(G)$.
\end{defi}

By a result of Laver \cite{LaverSF} and Woodin \cite{WoodinGM} the
ground model $A$ is definable in $A[G]$ from the parameter $\mathcal P^A(\lambda)$,
the power set of $\lambda$ as computed in $A$,
where $\lambda$ is the $A$ cardinality of $\mathbb Q$.
It follows that $M(G)$ is an elementary submodel of $A$,
although, of course, it may not belong to $A$.
Therefore it makes sense to define $M(G)[G]$.
%We denote by $M[\Gdot]$ the canonical name
%for $M[G]$ and, similarly, by $M(\Gdot)$ the canonical name
%for $M(G)$.
We will need the following easy facts, using the above notation.

\begin{lemma}\label{trace-model-generic} $M(G)[G]=M[G]$.
\end{lemma}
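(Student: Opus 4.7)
The plan is to prove the two inclusions separately, with the forward direction $M[G] \subseteq M(G)[G]$ being essentially immediate and the reverse requiring the construction of a name. For $M[G] \subseteq M(G)[G]$, the key observation is that for any $\mathbb{Q}$-name $\sigma \in M$, the name $\sigma$ itself already belongs to $M(G)$: names lie in $A$, so $\sigma \in A$, and the canonical name $\check\sigma$ belongs to $M$ by elementarity and satisfies $\check\sigma_G = \sigma$, so $\sigma \in M[G]$. Hence $\sigma \in M[G] \cap A = M(G)$, and $\sigma_G \in M(G)[G]$ by the definition of interpretation applied to the name $\sigma$ itself.

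The reverse inclusion requires more work. Given $b = \tau_G$ with $\tau \in M(G)$ a $\mathbb{Q}$-name, we have $\tau = \sigma_G$ for some $\sigma \in M$, and we must realize $\tau_G$ as the interpretation of a name already in $M$. The plan is to define, inside $A$, a ``double interpretation'' operation $\sigma \mapsto \sigma^*$ with the property that $\sigma^*_H = (\sigma_H)_H$ whenever $\sigma_H$ is itself a $\mathbb{Q}$-name. One concrete choice, exploiting the definability of the forcing relation in $A$, is
$$\sigma^* = \{(p,\pi) : p \in \mathbb{Q},\ \pi \in A^{\mathbb{Q}},\ p \forces \exists q \in \Gdot\,((q,\check\pi) \in \sigma)\}.$$
Since this is defined inside $A$ with $\sigma$ as the only nontrivial parameter, elementarity of $M$ in $A$ gives $\sigma^* \in M$, and a direct unfolding of the definitions of the forcing relation and of interpretation yields $\sigma^*_G = \tau_G = b$, so $b \in M[G]$.

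The main obstacle is giving a clean formal definition of the double interpretation and verifying the identity $\sigma^*_G = (\sigma_G)_G$ whenever $\sigma_G$ is itself a name. Once one invokes the forcing theorem, namely the definability of $\forces$ in $A$, the verification reduces to routine bookkeeping about which conditions in $G$ witness which memberships.
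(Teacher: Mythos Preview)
Your argument is correct and follows the same line as the paper's: both reduce the nontrivial inclusion $M(G)[G]\subseteq M[G]$ to constructing, from any $\sigma\in M$ naming an element of $A^{\mathbb Q}$, a single name in $M$ whose $G$-value is $(\sigma_G)_G$. The paper obtains this name via the Mixing Lemma (pick a maximal antichain in $M$ deciding which ground-model name $\sigma$ evaluates to, then amalgamate the witnesses into one $\sigma'\in M$), while you write it down explicitly using the definability of the forcing relation; these are interchangeable implementations of the same idea.

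One small technical point: as written, your $\sigma^*$ quantifies over all $\mathbb Q$-names $\pi$, so from $A$'s internal viewpoint it is a proper class and you cannot invoke elementarity of $M\prec A$ to place it in $M$. This is easily repaired by restricting to $\pi$ of rank below $\operatorname{rank}(\sigma)$, which loses nothing since any $(q,\pi)\in\sigma_G$ already satisfies $\operatorname{rank}(\pi)<\operatorname{rank}(\sigma_G)\leq\operatorname{rank}(\sigma)$; with that bound $\sigma^*$ is a set definable in $A$ from $\sigma$ and $\mathbb Q$, and your elementarity step goes through.
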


\begin{proof} Since $M\subseteq M(G)$, it suffices to show
$M(G)[G]\subseteq M[G]$.
Let $\tau\in M$ be a ${\mathbb Q}$-name for an element of
$A^{{\mathbb Q}}$.
Working in $M$ we can find a maximal antichain $\mathcal X$ in ${\mathbb Q}$ and, for
every $q\in \mathcal X$, a ${\mathbb Q}$-name $\sigma_q$ such that $q \forces \tau=\check{\sigma}_q$.
By the Maximality Lemma we can find in $M$ a single ${\mathbb Q}$-name $\sigma$ such that
$q \forces \sigma_q=\sigma$, for all $q\in A$.
It follows that $(\tau_{G})_{G}=\sigma_{G}$.
\end{proof}

\begin{coro}\label{two-models-generic} Suppose $N$ is another elementary
submodel of $A$ with $M \subseteq N \subseteq M(G)$.
Then $N[G]=M[G]$.
\qed
\end{coro}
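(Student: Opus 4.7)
The plan is to get this from Lemma \ref{trace-model-generic} together with the obvious monotonicity of the operation $M\mapsto M[G]$ in the first argument. Concretely, I will establish the two inclusions separately.

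For the inclusion $M[G]\subseteq N[G]$, I simply observe that $M\subseteq N$, so every $\mathbb{Q}$-name $\tau\in M$ is also in $N$, whence $\tau_G\in N[G]$. For the reverse inclusion $N[G]\subseteq M[G]$, I use that $N\subseteq M(G)$: every $\mathbb{Q}$-name $\tau\in N$ is then a $\mathbb{Q}$-name in $M(G)$, so $\tau_G\in M(G)[G]$, and by Lemma \ref{trace-model-generic} we have $M(G)[G]=M[G]$, giving $\tau_G\in M[G]$.

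There is really no obstacle here, since the only nontrivial ingredient is already packaged in Lemma \ref{trace-model-generic}; the one point worth noting is that $M(G)[G]$ is well-defined as an object, which relies on $M(G)$ being an elementary submodel of $A$ (guaranteed by the Laver--Woodin ground-model definability remark preceding the definition of $M(G)$). Once this is in place, combining the two inclusions yields $N[G]=M[G]$.
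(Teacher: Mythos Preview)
Your proof is correct and follows exactly the approach the paper intends: the corollary is marked with \qed\ because it is immediate from Lemma~\ref{trace-model-generic} via the monotonicity sandwich $M[G]\subseteq N[G]\subseteq M(G)[G]=M[G]$, which is precisely what you wrote out.
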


%In our situation we will have that ${\mathcal P}(\mathbb Q)\subseteq A$,
%so $G$ is $A$-generic iff it is $V$-generic. Hence, if $M\in V$ we can define
%$M[G]$ and $M(G)$ in $V[G]$. We then let $M[\Gdot]$ and $M(\Gdot)$ denote the canonical
%$\mathbb Q$-names for these objects.

We now generalize Definition \ref{M(G)-def} to a situation where we have
not one forcing, but an iteration of forcing notions.
Suppose $X\subseteq \ORD \cap A$ and $(\mathbb P_\xi : \xi \in X)$ is an increasing
chain of posets in $A$ such that $\mathbb P_\xi$ is a complete suborder of $\mathbb P_\eta$,
for all $\xi,\eta\in X$ with $\xi < \eta$. Suppose moreover
that $(\mathbb P_\xi : \xi \in X \cap (\alpha +1))$ is uniformly definable in $A$
with parameter $\alpha$.
Suppose $\delta \in X$ and $G_\delta$ is an $A$-generic filter over $\mathbb P_\delta$
and  let $G_\alpha$ be $G_\delta \cap \mathbb P_\alpha$, for $\alpha \in X\cap (\delta+1)$.
%Given an elementary submodel $M$ of $A$ we would like to define the model $M(G_\delta)$.
It will be convenient to write $G_{<\alpha}$ for the union of the $G_\xi$, for $\xi \in X\cap \alpha$.
So, if $X\cap \alpha$ has a largest element, say $\beta$, then $G_{<\alpha}$
is just $G_\beta$. Fix, as before, an elementary submodel $M$ of $A$.
We define, by induction on $\alpha \in X\cap (\delta +1)$,
what it means for $\alpha$ to be attainable from $M$ by $G_{<\alpha}$
and we construct a model $M(G_\alpha)$.
The sequence $(M(G_\alpha):\alpha \in X\cap (\delta+1))$ will form an increasing
chain of elementary submodels of $A$ and each of these models will contain $M$ as a submodel.
We let $M(G_{<\alpha})$ denote the union of the $M(G_\xi)$, for $\xi \in X\cap \alpha$.
If $\alpha$ is the least point of $X$ then $M(G_{<\alpha})$ is just $M$.
By induction and the theorem on elementary chain of models,
 $M(G_{<\alpha})$ will also be an elementary submodel of $A$.

\begin{defi}\label{M(G-delta)-def} Suppose $\alpha \in X\cap (\delta +1)$.
We say that $\alpha$ is {\em attainable} from $M$ by $G_{<\alpha}$ if
$\alpha \in M(G_{<\alpha})$. If this is the case we let $M[G_\alpha]=M(G_{<\alpha})[G_\alpha]$
and we let $M(G_\alpha)=M[G_\alpha]\cap A$. Otherwise we let  $M(G_\alpha)$ be equal to $M(G_{<\alpha})$.
\end{defi}

The idea is that we start with $M$. If $\alpha$ is the least element of $X$ we ask
if $\alpha \in M$. If so, then $\mathbb P_\alpha \in M$ as well and we can define
the model $M[G_\alpha]$. We then let $M(G_\alpha)$ be the trace of $M[G_\alpha]$ on
the ground model, i.e. $A$. In general, we ask if $\alpha$ appears in the union
of the previous models, i.e. $M(G_{<\alpha})$. If so, then we know that $\mathbb P_\alpha$
belongs to $M(G_{<\alpha})$, as well. Therefore, we can define the model $M(G_{<\alpha})[G_\alpha]$.
We then let $M(G_\alpha)$ be the trace of this model on $A$. Otherwise we let $M(G_\alpha)$ be $M(G_{<\alpha})$.
Clearly, in this way, we obtain an increasing chain of elementary submodels of $A$.
Note also that the model $M(G_\alpha)$ only depends on $M$ and $G_\alpha$ and
does not depend on the future generic filters $G_\beta$, for $\beta >\alpha$.

We now define what it means for a condition to make an ordinal attainable from $M$.
Before that we need to make some definitions. First of all, our transitive model $A$ will belong to $\mathscr A_\delta$,
for some $\delta  \in E$. We will have a sequence of posets $(\mathbb P_\alpha: \alpha \in E^*\cap \delta)$
such that $\mathbb P_\xi$ is a complete suborder of $\mathbb P_\eta$, for all $\xi <\eta$.
Moreover, for every $\alpha \in E^*\cap \delta$ the chain $(\mathbb P_\xi: \xi \in E^*\cap (\alpha +1))$
will be uniformly definable in $(A,\in,U^{\mathcal A})$ with parameter $\alpha$.
$M$ will be an elementary submodel of $A$ in $V$. Notice that if $\alpha \in E^*\cap \delta$
then a filter $G_\alpha$ is $V$-generic over $\mathbb P_\alpha$ iff it is $A$-generic over $\mathbb P_\alpha$.
Therefore, we can perform the construction of $M(G_\alpha)$ and $M[G_\alpha]$ in
$V[G_\alpha]$. We let $M(\Gdot_\alpha)$ denote
the canonical $\mathbb P_\alpha$-name for $M(G_\alpha)$ and we let $M[\Gdot_\alpha]$
denote the canonical $\mathbb P_\alpha$-name for  $M[G_\alpha]$, provided this model
makes sense, i.e. if $\alpha$ is attainable from $M$ by $G_{<\alpha}$.
Finally, for every condition $p$ we let $\xi(p)$ be the least $\xi \in E^*\cap \delta$ such that
$p\in \mathbb P_\xi$.

\begin{defi}\label{attainable-condition} Suppose $p$ is a condition and $\alpha \leq \delta$.
We say that $p$ {\em makes $\alpha$  attainable from} $M$  if there is $\xi \leq \min (\xi(p),\alpha)$
and $q\in \mathbb P_\xi$ such that $p\leq q$ and $q\forces_{\mathbb P_\xi} \alpha \in M(\Gdot_\xi)$.
\end{defi}

In our situation there will be canonical projections $p\mapsto p\rest \xi$, for $\xi \in E^*\cap \delta$.
We will have that $p\leq p\rest \xi$, for every $p$ belonging to $\mathbb P_\alpha$, for some $\alpha \in E^*\cap \delta$.
Moreover, if $q\in \mathbb P_\xi$ then $p\leq q$ iff $p\rest \xi \leq q$, and
$q$ and $p$ are compatible iff $q$ and $p\rest \xi$ are compatible.
The point then is that, if $p$ makes some $\alpha \in E^*\cap \delta$ attainable from $M$ and
$G_\alpha$ is $V$-generic over $\mathbb P_\alpha$ such that $p\rest \alpha \in G_\alpha$,
then we can define the model $M[G_\alpha]$.

We now review some definitions involving semiproper forcing and place them in our context.
Suppose $\mathbb Q$ is a forcing notion and let $\theta$ be a sufficiently large regular cardinal.
Let $M$ be a countable elementary submodel of $H(\theta)$ such that $\mathbb Q\in M$.
We say that a condition $q\in \mathbb Q$ is $(M,\mathbb Q)$-{\em semigeneric} if:
\[
q \forces_{\mathbb Q} M[\Gdot]\cap \omega_1=M\cap \omega_1,
\]
where $\Gdot$ is the canonical name for the $V$-generic filter over $\mathbb Q$.
Of course, here we can also write $M(\Gdot)$ instead of $M[\Gdot]$.
We say that $\mathbb Q$ is {\em semiproper} if for every such $M$ and every $p\in M\cap \mathbb Q$
there is $q\leq p$ which is $(M,\mathbb Q)$-semigeneric. The important point is that
semiproper forcing notions preserve $\omega_1$, indeed, they preserve stationary subsets of $\omega_1$.
Suppose now $\mathbb Q\in V_\alpha$, for some $\alpha \in E$, and let $M$ be an $\alpha$-model
such that $\mathbb Q \in M$. Note that a condition $q$ is $(M,\mathbb Q)$-semigeneric iff
it is $(M\cap V_\alpha,\mathbb Q)$-semigeneric, so we can reformulate semiproperness in terms
of the existence of semigeneric conditions for all $\alpha$-models containing $\mathbb Q$.
We will often use without mentioning the following fact.

\begin{prop}\label{commute} Suppose $\alpha,\beta \in E$ with $\alpha <\beta$.
Let $\mathbb Q\in V_\alpha$ be a forcing notion and $M$ a $\beta$-model such that $\mathbb Q\in M$.
Let $G$ be a $V$-generic filter over $\mathbb Q$. Then $M(G) \cong_\alpha (M\rest \alpha)(G)$.
\qed
\end{prop}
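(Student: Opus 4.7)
The plan is to lift the Mostowski collapse of ${\rm Hull}(M,V_\alpha)$ through the $\mathbb{Q}$-generic extension. Let $\pi\colon{\rm Hull}(M,V_\alpha)\to {\rm Hull}(M\rest \alpha,V_\alpha)$ be the collapsing isomorphism; it is an $\mathcal L$-isomorphism fixing $V_\alpha$ pointwise (so $\pi(\mathbb Q)=\mathbb Q$) and with $\pi[M]=M\rest \alpha$ by definition of the projection.

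Next I would lift $\pi$ to an isomorphism
\[
\tilde\pi\colon{\rm Hull}(M,V_\alpha)[G]\to {\rm Hull}(M\rest\alpha,V_\alpha)[G]
\]
of the $\mathbb Q$-generic extensions, defined by $\tilde\pi(\tau_G)=\pi(\tau)_G$. Well-definedness follows because $\pi$ preserves the $\in$-definable forcing relation for $\mathbb Q$, together with absoluteness of forcing inside the transitive collapse: if $\tau,\tau'\in M\cap V^{\mathbb Q}$ satisfy $\tau_G=\tau'_G$, some $p\in G$ forces $\tau=\tau'$ in $V$; absoluteness of forcing in the transitive model $\overline{{\rm Hull}(M,V_\alpha)}$, combined with $\pi$ being an $\in$-iso, gives $p\forces\pi(\tau)=\pi(\tau')$, hence $\pi(\tau)_G=\pi(\tau')_G$. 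The map $\tilde\pi$ is the identity on $V_\alpha[G]$ and carries $M[G]$ bijectively to $(M\rest\alpha)[G]$.

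The crucial step is showing that $\tilde\pi$ restricts to a bijection between $M(G)=M[G]\cap V_\kappa$ and $(M\rest\alpha)(G)=(M\rest\alpha)[G]\cap V_\kappa$. For this I invoke Laver--Woodin applied in $V[G]$: the ground model is definable from the parameter $\mathcal P^V(\lambda)\in V_\alpha$, where $\lambda=|\mathbb Q|<\alpha$. Since $\tilde\pi$ fixes this parameter and both sides inherit it, the property of being a ground-model set transfers across $\tilde\pi$, producing the desired restriction $\sigma_0\colon M(G)\to (M\rest\alpha)(G)$.

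Finally, extend $\sigma_0$ to the required $\alpha$-isomorphism $\sigma\colon{\rm Hull}(M(G),V_\alpha)\to {\rm Hull}((M\rest\alpha)(G),V_\alpha)$ by $\sigma(f(\bar x))=\sigma_0(f)(\bar x)$ for $f\in M(G)$ and $\bar x\in V_\alpha^{<\omega}$; since $\sigma_0$ is an $\in$- and $U$-preserving bijection fixing $V_\alpha\cap M(G)$, this is a well-defined $\mathcal L$-isomorphism and $\sigma[M(G)]=(M\rest\alpha)(G)$. The main obstacle is the third step: matching the external property ``lies in $V_\kappa$'' with the internal structure that $\tilde\pi$ transparently preserves. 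The smallness $|\mathbb Q|<\alpha$ is essential here so that the Laver--Woodin parameter lies in $V_\alpha$ and is fixed by $\tilde\pi$.
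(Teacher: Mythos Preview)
The paper states this proposition with a bare $\qed$ and gives no proof, so there is nothing to compare against; your argument is the proof.

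Your approach is correct. Two small points are worth tightening. First, when you invoke Laver--Woodin ``in $V[G]$'', what you actually need is Laver--Woodin applied inside $A[G]$ (where $A={\rm Hull}(M,V_\beta)$) and inside $A'[G]$ (where $A'=\overline{{\rm Hull}(M,V_\alpha)}$), since $M(G)=M[G]\cap A$ and $(M\rest\alpha)(G)=(M\rest\alpha)[G]\cap A'$ by definition, and elementarity is with respect to $A[G]$ and $A'[G]$ respectively. The point that makes this work is that $\mathcal P^{A}(\lambda)=\mathcal P^{A'}(\lambda)=\mathcal P^{V}(\lambda)\in V_\alpha$ (since $A$ end-extends $V_\beta\supseteq V_\alpha$ and $\lambda<\alpha$), so the same parameter is used on both sides and is fixed by $\tilde\pi$.

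Second, the well-definedness of $\sigma$ in your last paragraph does not follow merely from $\sigma_0$ being an $\in$-bijection on $M(G)$: you need that $\sigma_0$ and the identity on $V_\alpha$ are restrictions of a single isomorphism on a structure containing both. This is exactly what your $\tilde\pi$ provides: $M(G)\subseteq M[G]\subseteq {\rm Hull}(M,V_\alpha)[G]$ and $V_\alpha\subseteq{\rm Hull}(M,V_\alpha)[G]$, so ${\rm Hull}(M(G),V_\alpha)\subseteq{\rm Hull}(M,V_\alpha)[G]$ and $\sigma$ is simply the restriction of $\tilde\pi$. Once this is said, well-definedness and the isomorphism property are immediate.
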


\section{The iteration}

We now describe our iteration. Let us fix a function $U:\kappa \rightarrow V_\kappa$.
We refer to $U$ as the bookkeeping device. For instance, if $\kappa$ is supercompact
$U$ could be the Laver function for $\kappa$. Let $E^*= E \cup \{\alpha +1: \alpha \in E\}$.
Our plan is to define an increasing sequence of
semiproper posets $(\mathbb P_\alpha : \alpha \in E^*)$ such that

\begin{itemize}
\item[$(i)$] $\mathbb P_\alpha$ is of size $< {\rm next}(\alpha)$ and is uniformly definable in $V_\kappa$,
 with parameters $\alpha$ and $U\! \restriction \! \alpha$, for every $\alpha \in E$,
\item[$(ii)$] if $\alpha \in E$ and $U(\alpha)$ is a $\mathbb P_\alpha$-name $\dot{\mathbb Q}_\alpha$ for a semiproper forcing
notion then $\mathbb P_{\alpha+1}$ is isomorphic to $\mathbb P_{\alpha}\ast \dot{\mathbb Q}_\alpha$,
\item[$(iii)$] if $\alpha, \beta \in E^*$ and $\alpha <\beta$ then $\mathbb P_\alpha$ is a complete suborder of $\mathbb P_\beta$,
 \item[$(iv$)] if $\alpha$ is a limit of $E$ and is either inaccessible or of cofinality $\omega_1$ then $\mathbb P_\alpha$ is
 equivalent to the direct limit of $(\mathbb P_\gamma : \gamma\in E^* \cap \alpha)$.
 \end{itemize}

We then let $\mathbb P_\kappa$ be the direct limit of  $(\mathbb P_\alpha : \alpha \in E^*)$.
Condition $(iv)$ is not really needed for the construction itself.
It comes in later in order to prove the $\kappa$-c.c. of $\mathbb P_\kappa$.
It is also important in $\diamondsuit$-like arguments. For instance, if $U$ is a Laver function
and we wish our iteration to capture all semiproper forcing notions in $V^{\mathbb P_\kappa}$.
We will say a few more words about this point later.

Before we start our construction let us make some preliminary remarks and definitions.
Suppose $\alpha \in E^*$ and we have defined the $\mathbb P_\xi$, for $\xi \in E^*\cap (\alpha+1)$,
satisfying conditions $(i)$-$(iii)$. Suppose $\delta \in E\setminus \alpha$ and $M\in \mathscr C_{\geq \delta}$.
Recall that if $r$ is a condition then $\xi(r)$ denotes the least $\xi\in E^*$ such that $r\in \mathbb P_\xi$.

\begin{defi}\label{active} Suppose $r\in \mathbb P_\alpha$, $\delta \in E\setminus \alpha$,
and $M\in \mathscr C_{\geq \delta}$.  We say that $r$ {\em makes $M$ active at $\delta$}
 if there is some $\xi \leq \xi(r)$ which is made attainable from $M$ by $r$ such
 that
 \[
r \forces_{\mathbb P_{\xi(r)}} M[\dot{G}_\xi] \cap E \cap \delta \mbox{ is cofinal in } E\cap \delta.
\]
\end{defi}

Notice that if $\delta$ is a successor point of $E$, say $\delta ={\rm next}(\gamma)$, for $\gamma \in E$,
this simply means that $r$ makes $\gamma$ attainable from $M$.
If $\delta$ is a limit point of $E$, by Lemma \ref{E_A-cofinal}, this means
that $r$ forces that $M[\Gdot_\xi]\cap \delta$ is cofinal in $\delta$.

Now, suppose $\M$ is a finite subset of $\mathscr C$.
We let
\[
\M^{r,\delta} = \{ M\restriction \delta : M\in \M \cap \mathscr C_{\geq \delta}\mbox{ and $r$ makes $M$ active at $\delta$} \}.
\]
Notice that if $s\leq r$ then $\M^{r,\delta}$ is a subset of $\M^{s,\delta}$.
Let $\dot{\M}^{\alpha,\delta}$ be the $\mathbb P_\alpha$-name for the union of the $\M^{r,\delta}$,
for $r$ in the generic filter $G_\alpha$ over $\mathbb P_\alpha$.
Since $\M$ is finite there is always going to be a condition $r\in G_\alpha$ which {\em decides}
$\dot{\M}^{\alpha,\delta}$, i.e. such that $\M^{s,\delta}=\M^{r,\delta}$, for all $s\leq r$ with $s\in \mathbb P_\alpha$.
If $\delta={\rm next}(\alpha)$ we will write $\dot{\M}^{\delta}$ instead of $\dot{\M}^{\alpha,\delta}$.
We will one more definition.

\begin{defi}\label{weak-in-chain}
Let  $\M$ is a finite collection of $\delta$-models and let $G_\alpha$ be $V$-generic over $\mathbb P_\alpha$.
We say that $\M$ is a {\em weak $\in_\delta$-chain at $\alpha$} if, for every $M,N\in \M$,
we have:
\begin{enumerate}
\item if $M \cap \omega_1 = N\cap \omega_1$ then $M= N$,
\item if $M\cap \omega_1 < N\cap \omega_1$ then $M\in_\delta N(G_\alpha)$.
\end{enumerate}
\end{defi}

This definition is of course made in the generic extension $V[G_\alpha]$ and depends
on our choice of the generic filter $G_\alpha$.
Notice that if a condition $p\in \mathbb P_\alpha$ forces $\M$ to be a weak $\in_\delta$-chain at $\alpha$,
then it forces it to be a weak $\in_\delta$-chain at $\alpha'$, for any $\alpha'\in E^*$
such that $\alpha \leq \alpha' \leq \delta$.

We now outline our construction.
We will define the $\mathbb P_\alpha$ by induction on $\alpha$. Once we have defined
$\mathbb P_\alpha$, for $\alpha \in E$, we  define a $\mathbb P_\alpha$-name $\dot{\mathbb Q}_\alpha$.
If $U(\alpha)$ is a $\mathbb P_\alpha$-name for a semiproper forcing notion,
we let $\dot{\mathbb Q}_\alpha$ be equal to $U(\alpha)$, otherwise we let $\dot{\mathbb Q}_\alpha$
be the canonical $\mathbb P_\alpha$-name for the trivial forcing notion $\mathbbm{1}$.
If $\alpha \in E$, let us say that $p$ is an $\alpha$-{\em pair} if it is of the form $(\M_p,w_p)$, where $\M_p$
 is a finite collection of models from $\mathscr C_{\leq \alpha}$ and $w_p$ is a finite function with $\dom(w_p)$
 a subset of $E \cap \alpha$. The forcing notion $\mathbb P_\alpha$ will consist of certain $\alpha$-pairs.
Once we have defined $\mathbb P_\alpha$ we define $\mathbb P_{\alpha+1}$ as the set of all pairs
$(\M_p,w_p)$, where $\M_p$ is a finite subset of $\mathscr C_{\leq \alpha}$ and $w_p$ is a finite function with
 $\dom(w_p)$ a subset of $E\cap (\alpha+1)$ such that $(\M_p,w_p\! \restriction \! \alpha)\in \mathbb P_\alpha$
 and, if $\alpha \in \dom(w_p)$, then $w_p(\alpha)$ is a canonical $\mathbb P_{\alpha}$-name
 for a condition in $\dot{\mathbb Q}_\alpha$. The order on $\mathbb P_{\alpha +1}$ is defined
 by letting $q\leq p$ if $(\M_q,w_q\! \restriction \! \alpha) \leq \M_p,w_p\! \restriction \! \alpha)$ and,
 if $\alpha \in \dom(w_p)$ then $\alpha \in \dom(w_q)$ and
 $(\M_q,w_q\! \restriction \! \alpha) \forces_{\mathbb P_\alpha} w_q(\alpha)\leq w_p(\alpha)$.
 Thus, $\mathbb P_{\alpha+1}$ will be canonically isomorphic to $\mathbb P_\alpha \ast \dot{\mathbb Q}_\alpha$.
 If $p$ is  an $\alpha$-pair  and $\gamma \in E\cap \alpha$ we will
let $p\! \restriction \!  \gamma$ denote the pair $(\M_p \rest \gamma, w_p\rest \gamma)$
where $\M_p$ is the collection $\{ M \rest \gamma : M \in \M_p\}$ and $w_p\rest \gamma$
is simply the restriction of $w_p$ to $\dom(w_p)\cap \gamma$. Similarly, we will let $p\! \restriction \! (\gamma +1)$
denote $(\M_p\rest \gamma,w_p\! \restriction \! \gamma +1)$.
It will be immediate from the construction that if $p \in \mathbb P_\alpha$ then
$p\! \restriction \! \gamma \in \mathbb P_\gamma$, for all $\gamma \in E^*\cap \alpha$.
In fact, the map $p \mapsto p \! \restriction \!  \gamma$
will be a canonical projection of $\mathbb P_\alpha$ to $\mathbb P_\gamma$.
In order for a pair $p$ to be in $\mathbb P_\alpha$ there will be two types of requirements.
First, if $\gamma \in \dom(w_p)$ we will require that $w_p(\gamma)$ be forced by $p\restriction \gamma$ to be
$(M[\Gdot_\gamma],\dot{\mathbb Q}_\gamma)$-semigeneric, for all $M\in \M_p$ for which
this makes sense. The second is that for each $\delta \in E\cap (\alpha +1)$, if
$\gamma\in E^*\cap \delta$ is sufficiently large, then $p\restriction \gamma$ forces
$\dot{\M}_p^{\gamma,\delta}$ to be a weak $\in_\delta$-chain at $\gamma$. Of course, if $\delta$
is a successor point of $E$, say $\delta ={\rm next}(\gamma)$, we may simply say  that
$p\restriction (\gamma +1)$ forces  $\dot{\M}_p^\delta$ to be a weak $\in_\delta$-chain at $\gamma +1$.
We will need the second condition in order to be able to extend $w_p$ to any $\gamma \in E\cap \alpha$
and also to prove the semiproperness of the iteration.

With these remarks in mind we are ready to define the posets $\mathbb P_\alpha$.
Of course, formally the definition is by induction on $\alpha$.

\begin{defi}\label{main-def} Suppose $\alpha \in E$.  We say that a pair $p$ of the form $(\M_p,w_p)$
belongs to $\mathbb P_\alpha$ if $\M_p$ is a finite subset of $\mathscr C_{\leq \alpha}$,
$w_p$ is a finite function with domain included in $E\cap \alpha$ and:
\begin{enumerate}
\item for every $\delta \in E\cap (\alpha +1)$ there exists $\bar{\delta} \in E\cap \delta$
such that, if $\gamma \in E^*$ and $\bar{\delta} <\gamma <\delta$, then
\[
p\restriction \gamma \forces_{\mathbb P_\gamma} \dot{\M}_p^{\gamma,\delta} \mbox{ is a weak }
\in_\delta \! \! \mbox{-chain at } \gamma.
\]
\item If $\gamma \in \dom(w_p)$ and $\delta ={\rm next}(\gamma)$ then
\[
p\! \restriction \! \gamma \forces_{\mathbb P_{\gamma}} w_p(\gamma) \mbox{ is }
(M[\Gdot_{\gamma}],\dot{\mathbb Q}_\gamma)\mbox{-semigeneric, for all } M\in \dot{\M}^\delta_p.
\]
\end{enumerate}
The order is defined as follows: $q \leq p$ if for every $\gamma \in E\cap (\alpha +1)$ and a $\gamma$-model
$M\in \M_p$ there is $N\in \M_q$ such that $M= N\! \restriction \! \gamma$, $\dom(w_p)\subseteq \dom(w_q)$,
and, for every $\gamma \in \dom(w_p)$,
\[
q\! \restriction \! \gamma \forces_{\mathbb P_\gamma} w_q(\gamma)\leq w_p(\gamma).
\]
\end{defi}

Once we have $\mathbb P_\alpha$, we define $\mathbb P_{\alpha +1}$, for $\alpha \in E$, as described above.
Clearly, the order relation $\leq$ is transitive on $\mathbb P_\alpha$, for $\alpha \in E^*$.
Suppose $\alpha,\beta \in E^*$ and $\alpha <\beta$.
It follows  from the definition  that $\mathbb P_\alpha$ is contained  in $\mathbb P_\beta$
and the order relation on $\mathbb P_\alpha$ is the restriction of the order relation on $\mathbb P_\beta$.
Suppose $p\in \mathbb P_\beta$, $q\in \mathbb P_\alpha$ and $q\leq p\! \restriction \! \alpha$.
Then we can define a condition $r$  as follows. Set $\M_r=\M_p\cup \M_q$
and let $w_r = w_q \cup w_p\! \restriction [\alpha,\beta)$.
It is straightforward to check that $r\in \mathbb P_\beta$ and is a greatest lower bound of $p$ and
$q$. We will denote  $r$ by $p\meet q$. It follows that the map $p\mapsto p\! \restriction \! \alpha$
is a projection from $\mathbb P_\beta$ to $\mathbb P_\alpha$.
It is easy to see that the set of conditions $p$ in $\mathbb P_\alpha$ such that $p\! \restriction \! \gamma$
fixes $\M_p$ at $\gamma$, for every $\gamma \in \dom(w_p)$, is dense in $\mathbb P_\alpha$.
It will be convenient to write $\mathbb P_{<\alpha}$ for the union of the $\mathbb P_\xi$, for $\xi \in E^*\cap \alpha$.
Also, if $p\in \mathbb P_{<\kappa}$ and $\xi \in E^*$ we write
$\mathbb P_\xi\rest p$ for the set of all $q\in \mathbb P_\xi$ extending $p\rest \xi$.
Finally, we write $\mathbb P_{<\alpha}\rest p$ for the union of the $\mathbb P_{\xi}\rest p$,
for $\xi \in E^*\cap \alpha$.

\begin{lemma}\label{adding-M} Suppose $\alpha \in E$ and $p$ is a condition in $\mathbb P_\alpha$.
Let $M$ be a $\beta$-model, for some $\beta >\alpha$, and suppose that  $p\in M$. Then there is
a condition $p^M\leq p$ such that $M\rest \alpha \in \M_{p^M}$.
\end{lemma}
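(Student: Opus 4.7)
The plan is to take
\[
p^M = (\M_p \cup \{M\rest\alpha\},\, w_{p^M}),
\]
keeping the same domain of the working part and strengthening each $w_p(\gamma)$ to $w_{p^M}(\gamma)$ in order to accommodate the semigenericity demand produced by the new side condition.

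The driving observation is that $p \in M$ together with finiteness of $\M_p$ forces $\M_p \subseteq M$ by the elementarity of $M \prec V_\kappa$. This yields two structural facts that I shall use repeatedly. First, for each $N \in \M_p$, $N \cap \omega_1$ is a countable ordinal belonging to $M$, so $N \cap \omega_1 < M \cap \omega_1$; since projection is the identity on $\omega_1$, this inequality survives for the corresponding $\delta$-projections. Second, for any $\delta \in E \cap (\alpha+1)$ and any $N \in \M_p \cap \Ccal_{\geq \delta}$, one has $N\rest\delta \in_\delta M\rest\delta$: writing $\pi_\delta$ for the Mostowski collapse of $\mathrm{Hull}(M,V_\delta)$, the inclusion $N \subseteq M$ places $\pi_\delta(N)$ inside $M\rest\delta$, while the factorization of $\pi_\delta$ restricted to $\mathrm{Hull}(N,V_\delta)$ through $\overline{\mathrm{Hull}(N,V_\delta)}$ exhibits an $\in$-isomorphism from $\mathrm{Hull}(\pi_\delta(N),V_\delta)$ onto $\mathrm{Hull}(N\rest\delta,V_\delta)$ fixing $V_\delta$ pointwise and sending $\pi_\delta(N)$ to $N\rest\delta$, so $\pi_\delta(N) \cong_\delta N\rest\delta$.

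The working part $w_{p^M}$ is built by recursion on $\gamma \in \dom(w_p)$ in increasing order. At stage $\gamma$, set $\delta = {\rm next}(\gamma)$; both $\dot{\mathbb Q}_\gamma$ and $w_p(\gamma)$ belong to $M$ and, modulo the projection identification, to $M\rest\delta$. Apply the forced semi-properness of $\dot{\mathbb Q}_\gamma$ to the countable submodel $M\rest\delta[\Gdot_\gamma]$ and select a $\mathbb P_\gamma$-name $w_{p^M}(\gamma) \leq w_p(\gamma)$ that is $(M\rest\delta[\Gdot_\gamma],\dot{\mathbb Q}_\gamma)$-semigeneric.

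For the verification of Definition \ref{main-def}, keep the witness $\bar\delta$ provided by $p$ in clause (1). By Proposition \ref{transitivity-projections} the only potentially new element of $\dot\M^{\gamma,\delta}_{p^M}$ is the $\delta$-model $M\rest\delta$, and the structural facts place it strictly above every other entry both in $\omega_1$-intersection and in $\in_\delta$-membership, so the weak $\in_\delta$-chain property for $\dot\M^{\gamma,\delta}_{p^M}$ at $\gamma$ is preserved. Clause (2) follows at once: $w_{p^M}(\gamma)$ is semigeneric for $M\rest\delta$ by construction, and it inherits semigenericity for each $M' \in \dot\M^\delta_p$ from $w_p(\gamma)$, because strengthening a semigeneric condition preserves semigenericity. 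The inequality $p^M \leq p$ is immediate: every $N \in \M_p$ serves as its own witness in $\M_{p^M}$ for the projection clause of the order, and the $w$-components are pointwise stronger. The main technical point is the apparent need for \emph{simultaneous} semigenericity of $w_{p^M}(\gamma)$ for the whole enlarged chain; this is defused by the preservation of semigenericity under strengthening, reducing the problem to single-model semi-properness at the newly added top model $M\rest\delta$, which is exactly what the forced semi-properness of $\dot{\mathbb Q}_\gamma$ provides.
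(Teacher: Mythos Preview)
Your argument is correct and follows essentially the same route as the paper's proof: set $\M_{p^M}=\M_p\cup\{M\rest\alpha\}$, keep $\dom(w_{p^M})=\dom(w_p)$, and at each $\gamma\in\dom(w_p)$ use the forced semiproperness of $\dot{\mathbb Q}_\gamma$ to strengthen $w_p(\gamma)$ so that it becomes $(M\rest{\rm next}(\gamma)[\Gdot_\gamma],\dot{\mathbb Q}_\gamma)$-semigeneric, the semigenericity for the old models being preserved under strengthening. The paper invokes Proposition~\ref{projection-membership} for the key structural fact $N\rest\delta\in_\delta M\rest\delta$, whereas you give a direct argument via the collapse; both are fine.

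One small correction: you justify $\M_p\subseteq M$ by ``the elementarity of $M\prec V_\kappa$'', but a $\beta$-model is in general \emph{not} an elementary submodel of $V_\kappa$ (see the remark following Definition~\ref{C-def}). The conclusion is still right, since $M$ is elementary in the transitive $\ZFC$-model ${\rm Hull}(M,V_\beta)$, and $p\in M$ together with finiteness of $\M_p$ gives $\M_p\subseteq M$ there.
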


\begin{proof} Let $\M_{p^M} = \M_p\cup \{ M\! \restriction \! \alpha \}$. Let $\gamma \in \dom(w_p)$.
By Proposition \ref{projection-membership}, we have that if $N\in \M_p$ then
$N\! \restriction \! \gamma \! \in_\gamma M\! \restriction \! \gamma$, for every $\gamma \in E\cap (\alpha +1)$.
We now define $w_{p^M}$. We will have $\dom(w_{p^M})=\dom(w_p)$.
Suppose $\gamma \in \dom(w_p)$ and note that  $\gamma$ as well as $w_p(\gamma)$ are in  $M$.
Since $U \! \restriction \! (\gamma +1) \in M$, it follows that  $\mathbb P_\gamma$ and $\dot{\mathbb Q}_\gamma$ also
belong to $M$.  Now $\dot{\mathbb Q}_\gamma$ is forced to be semiproper and $w_p(\gamma)\in M$,
so we can find a $\mathbb P_\gamma$-name $w_{p^M}(\gamma)$ for a condition in $\dot{\mathbb Q}_\gamma$
which is forced by $p\! \restriction \! \gamma$ to extend $w_p(\gamma)$ and be
$(M[\Gdot_\gamma],\dot{\mathbb Q}_\gamma)$-semigeneric. If we let $M'=M\! \restriction \! {\rm next}(\gamma)$,
then clearly $p\! \restriction \! \gamma$ also forces that $w_{p^M}(\gamma)$ is
$(M'[\Gdot_\gamma],\dot{\mathbb Q}_\gamma)$-semigeneric. Finally, we let $p^M=(\M_{p^M},w_{p^M})$.
It is straightforward to check that $w_{p^M}$ is as required.
\end{proof}

 \begin{lemma}\label{extending-w_p} Let $\gamma \in E$ and $\delta ={\rm next}(\gamma)$.
 Suppose $p\in \mathbb P_\delta$  and $\gamma \notin \dom(w_p)$. Suppose also $M\in \M_p$
 and $\dot{w}$ is a $\mathbb P_\gamma$-name for a condition in $\dot{\mathbb Q}_\gamma$.
Suppose that $p\rest \gamma \forces_{\mathbb P_\gamma} M\in \dot{\M}_p^\delta \mbox{ and } \dot{w}\in M[\Gdot_\gamma]$.
Then there is a $\mathbb P_\gamma$-name $\dot{w}^*$ for a condition in $\dot{\mathbb Q}_\gamma$
such that $p\rest \gamma \forces_{\mathbb P_\gamma} \dot{w}^*\leq \dot{w} \mbox{ and } \dot{w}^* \mbox{ is }
(N[\Gdot_\gamma],\dot{\mathbb Q}_\gamma)\mbox{-semigeneric, for all } N\in \dot{\M}_p^\delta$,
such that $M\cap \omega_1 \leq N\cap \omega_1$.
 \end{lemma}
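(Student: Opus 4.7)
The plan is to construct $\dot w^*$ by a finite induction through the models in $\dot\M_p^\delta$ of $\omega_1$-intersection at least $M\cap\omega_1$, exploiting the weak $\in_\delta$-chain structure together with the semiproperness of $\dot{\mathbb Q}_\gamma$. First, I would pass to a generic extension: fix $G_\gamma$ generic over $\mathbb P_\gamma$ with $p\rest\gamma\in G_\gamma$. By Definition~\ref{main-def}(1) applied to $\delta$ (with $\bar\delta$ chosen strictly below $\gamma$), we have in $V[G_\gamma]$ that $\M_p^\delta=\dot\M_p^{\gamma,\delta}$ is a weak $\in_\delta$-chain at $\gamma$. Enumerate the models $N\in\M_p^\delta$ with $N\cap\omega_1\geq M\cap\omega_1$ in strictly increasing order of $\omega_1$-intersection as $M=N_0,N_1,\ldots,N_k$; by the chain property together with the transitivity of $\in_\delta$ on $\mathscr C$, we have $N_j\in_\delta N_{i+1}(G_\gamma)$ for all $j\leq i<k$.

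Next, I would construct by induction on $i\leq k$ a condition $w_i\in\dot{\mathbb Q}_\gamma^{G_\gamma}$ with $w_0\leq\dot w^{G_\gamma}$, $w_{i+1}\leq w_i$, such that $w_i$ is $(N_j[G_\gamma],\dot{\mathbb Q}_\gamma)$-semigeneric for every $j\leq i$ and $w_i\in N_{i+1}[G_\gamma]$ when $i<k$. For the base step, fix $N_0'\in N_1(G_\gamma)$ with $N_0'\cong_\delta N_0$. The collapsing $\delta$-isomorphism between ${\rm Hull}(N_0,V_\delta)$ and ${\rm Hull}(N_0',V_\delta)$ is the identity on $V_\delta$, since $V_\delta$ is transitive and contained in both hulls. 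Consequently $N_0'\cap V_\delta=N_0\cap V_\delta$, the $\mathbb P_\gamma$-name $\dot w$ (which we may take in $M\cap V_\delta$) also lies in $N_0'$, and $(N_0[G_\gamma],\dot{\mathbb Q}_\gamma)$-semigenericity coincides with $(N_0'[G_\gamma],\dot{\mathbb Q}_\gamma)$-semigenericity, both reducing to the preservation of the common value $N_0\cap\omega_1=N_0'\cap\omega_1=M\cap\omega_1$. Since $N_1[G_\gamma]$ contains $N_0'$, $\dot{\mathbb Q}_\gamma^{G_\gamma}$ and $\dot w^{G_\gamma}$, and the semiproperness of $\dot{\mathbb Q}_\gamma^{G_\gamma}$ reflects into $N_1[G_\gamma]$ by elementarity, there is $w_0\in N_1[G_\gamma]$ extending $\dot w^{G_\gamma}$ and $(N_0'[G_\gamma],\dot{\mathbb Q}_\gamma)$-semigeneric. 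The inductive step is identical, with $N_i,N_{i+1}$ in place of $N_0,N_1$; semigenericity for earlier $N_j$, $j<i$, is preserved under further extensions.

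Finally, since the construction is uniformly definable from $G_\gamma$, the Maximality Lemma produces a $\mathbb P_\gamma$-name $\dot w^*$ that is forced by $p\rest\gamma$ to equal the resulting $w_k$, and this $\dot w^*$ satisfies the required properties. The principal technical obstacle is arranging that the previously constructed $w_{i-1}$ already lies in $N_i[G_\gamma]$, so that semiproperness of $\dot{\mathbb Q}_\gamma$ can be applied inside $N_{i+1}[G_\gamma]$ to yield a further extension that is both semigeneric for $N_i[G_\gamma]$ and resides in $N_{i+1}[G_\gamma]$. This is precisely what the $V_\delta$-fixing feature of the $\delta$-collapsing isomorphism delivers: the virtual representatives $N_j'\in N_{i+1}(G_\gamma)$ transparently identify the relevant $\mathbb P_\gamma$-names and transport the semigenericity data between $N_j$ and $N_j'$.
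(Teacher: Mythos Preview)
Your argument is correct and follows essentially the same strategy as the paper: pass to an arbitrary generic $G_\gamma$ containing $p\rest\gamma$, enumerate the relevant models in increasing order of $\omega_1$-height, and run a finite induction using semiproperness of $\mathbb Q_\gamma$ to build a decreasing chain of conditions, each semigeneric for the next model. The paper organizes the induction slightly differently: rather than working with the virtual models $N_i$ and their $\cong_\delta$-representatives $N_i'$, it passes immediately to the traces $N_i = M_i[G_\gamma]\cap H((2^\lambda)^+)$ and observes that these form an \emph{actual} $\in$-chain of countable models in $V[G_\gamma]$, so that the standard semiproperness argument applies verbatim. Your unpacking of the $\cong_\delta$-isomorphism (that it fixes $V_\delta$ pointwise, so the representatives agree on all data below $\delta$) is precisely the mechanism that makes the paper's trace construction work; the two presentations are equivalent.

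One small slip: you write that ``the $\mathbb P_\gamma$-name $\dot w$ (which we may take in $M\cap V_\delta$)''. The hypothesis only gives $\dot w^{G_\gamma}\in M[G_\gamma]$, not $\dot w\in M$. What you should say is that $w=\dot w^{G_\gamma}\in M[G_\gamma]\cap\mathbb Q_\gamma\subseteq M[G_\gamma]\cap V_\delta[G_\gamma]=(M\cap V_\delta)[G_\gamma]=(N_0'\cap V_\delta)[G_\gamma]\subseteq N_0'[G_\gamma]$, and this is the object you feed into the induction. This does not affect the validity of the argument.
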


\begin{proof}  Let $G_\gamma$ be any $V$-generic filter over $\mathbb P_\gamma$ containing
$p\! \restriction \! \gamma$. Let $\M$ be the interpretation of $\dot{\M}_p^\delta$ by $G_\gamma$
and let $\{ M_0,\ldots,M_{n-1}\}$ be the enumeration of $\M$ in the increasing order of the intersections
with $\omega_1$.  Then $M_i\in_{\delta} M_j[G_\gamma]$, for all $i<j$.
Let $\mathbb Q_\gamma$ be the interpretation of $\dot{\mathbb Q}_\gamma$ by $G_\gamma$.
 Then $\mathbb Q_\gamma$ is a semiproper forcing notion and is definable from
$U\restriction (\gamma +1)$. Therefore it belongs to $M_i[G_\gamma]$, for all $i$.
Suppose $M=M_k$ and let $w$ be the interpretation of $\dot{w}$ by $G_\gamma$.
Then $w\in M_k[G_\gamma]$. Let $\lambda$ be the cardinality of  ${\mathbb Q}_\gamma$ and assume,
without loss of generality, that the domain of $\mathbb Q_\gamma$ is $\lambda$.
Let $N_i=M_i[G_\gamma] \cap H({2^\lambda}^+)$, for all $i$.
Note that a condition is $(M_i[G_\gamma],\mathbb Q)$-semigeneric iff it is $(N_i[G_\gamma],\mathbb Q)$-semigeneric.
Let $\N= \{ N_i: k\leq i < n\}$.
Then $\N$ is an actual $\in$-chain of countable models containing $\mathbb Q_\gamma$.
We let $w_k=w$ and use the definition of semiproperness to build a decreasing chain of conditions
$w_k\geq w_{k+1}\geq \ldots \geq w_n$,  such that $w_i \in N_i$ and
$w_{i+1}$ is   $(N_i[G_\gamma],{\mathbb Q}_\gamma)$-semigeneric, for $k\leq i<n$.
Let  $w^*=w_n$ and pick a  canonical $\mathbb P_\gamma$-name $\dot{w}^*$ for $w^*$.
Since $G_\gamma$ was an arbitrary generic filter over $\mathbb P_\gamma$ containing $p\! \restriction \! \gamma$,
we have that  $\dot{w}^*$ is as required.
\end{proof}

We are now ready to state the main technical result.

\begin{teo}\label{main-thm} Suppose $\gamma \in E$ and  $p\in \mathbb P_\gamma$.
Let $M$ be an $\alpha$-model, for some $\alpha >\gamma$, and suppose that $M\! \restriction \! \gamma \in \M_p$.
Then $p \forces_{\mathbb P_\gamma}M(\Gdot_\gamma)\cap \omega_1= M\cap \omega_1$.
\end{teo}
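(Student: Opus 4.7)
The plan is to prove the theorem by induction on $\gamma \in E$, strengthening the statement so the hypothesis carries across the intermediate stages in $E^*$ as well. As a preliminary reduction, since $\omega_1 < \gamma$ and the transitive collapse defining $M \restriction \gamma$ is the identity on ordinals below $\gamma$, one has $M \cap \omega_1 = (M \restriction \gamma) \cap \omega_1$; iterating Proposition \ref{commute} through the stages of $E^* \cap \gamma$ further gives $M(\Gdot_\gamma) \cap \omega_1 = (M \restriction \gamma)(\Gdot_\gamma) \cap \omega_1$, so I may replace $M$ by $M \restriction \gamma$ and assume $M \in \mathscr{C}_\gamma$.

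For the successor step $\gamma = {\rm next}(\beta)$ with $\beta \in E$, I would first invoke the induction hypothesis at $\beta$ applied to $p \restriction \beta$ and to $M \restriction \beta \in \M_{p \restriction \beta}$ (which lies there by the definition of the projection on conditions), obtaining semigenericity at stage $\beta$. Then I would verify that $M \in \dot{\M}_p^{\gamma}$, so that clause $(2)$ of Definition \ref{main-def} forces $w_p(\beta)$ to be $(M[\Gdot_\beta], \dot{\mathbb{Q}}_\beta)$-semigeneric; Lemma \ref{trace-model-generic} then upgrades this to $(M, \mathbb{P}_{\beta+1})$-semigenericity at stage $\beta+1$. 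The remaining passage from $\mathbb{P}_{\beta+1}$ to $\mathbb{P}_\gamma$ only adjoins extra side-condition models and adds no new $\mathbb{P}_\gamma$-names, so no countable ordinals enter $M(\Gdot_\gamma)$ beyond those in $M(\Gdot_{\beta+1})$. For the limit case, a $\mathbb{P}_\gamma$-name for a countable ordinal in $M(\Gdot_\gamma)$ can, via clause $(iv)$ of the iteration outline together with the countability of $M$, be decided by some $q \leq p$ lying in $\mathbb{P}_{\gamma'}$ for $\gamma' \in E^* \cap \gamma$; the induction hypothesis at $\gamma'$ applied to $q$ and $M \restriction \gamma' \in \M_{q \restriction \gamma'}$ then yields the desired bound. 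The base case $\gamma = \min E$ is pure side-condition forcing and is handled directly.

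The main technical obstacle is verifying the activity requirement $M \in \dot{\M}_p^{\gamma}$ in the successor step. By Definition \ref{active}, this means exhibiting some $\xi \leq \xi(p)$ attainable from $M$ by $p$ such that $p \Vdash M[\Gdot_\xi] \cap E \cap \gamma$ is cofinal in $E \cap \gamma$, which for $\gamma = {\rm next}(\beta)$ reduces to forcing $\beta \in M[\Gdot_\xi]$ for some such $\xi$. I expect this to follow from $M = M \restriction \gamma \in \M_p$, the elementarity of $M$ inside its admissible hull, and the uniform definability of the iteration from the parameter $U \restriction \gamma$, which together ensure that $\beta$ is recognized inside $M(\Gdot_\xi)$ at an appropriate stage. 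Reconciling this activity requirement with the weak $\in_\gamma$-chain condition in clause $(1)$ of Definition \ref{main-def} across every intermediate stage of $E^*$ is where the combinatorial heart of the argument lies.
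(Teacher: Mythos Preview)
Your inductive scaffolding is right, but both the successor and limit steps have genuine gaps that are exactly where the real work in the paper lies.

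\textbf{Successor step.} You write that the passage from $\mathbb P_{\beta+1}$ to $\mathbb P_\gamma$ ``only adjoins extra side-condition models and adds no new $\mathbb P_\gamma$-names, so no countable ordinals enter $M(\Gdot_\gamma)$ beyond those in $M(\Gdot_{\beta+1})$.'' This is the crux of the theorem, not a triviality. The quotient $\mathbb R_\gamma = \mathbb P_\gamma / G_\beta$ is a genuine forcing (it adds the generic chain of $\gamma$-models), and a name $\tau \in M[G_\beta]$ for a ground-model element may be decided only by conditions $q \leq p$ that lie well outside $M(G_{\beta+1})$. What you must show is that for every such $q$ there is a condition $r \in D \cap M(G_{\beta+1})$ compatible with $q$, where $D$ is the dense set of deciders. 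The paper does this (Claim~\ref{successor-generic}) by an amalgamation argument: one passes the side-condition chain $\N$ of models below $M$ into $M(G_{\beta+1})$, uses elementarity to find $r$ there with $\N \subseteq \M_r$, and then has to verify that $q$ and $r$ are compatible in $\mathbb P_\gamma$, which is precisely where the weak $\in_\gamma$-chain requirement (clause~(1) of Definition~\ref{main-def}) is used in both directions. None of this is visible in your sketch.

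\textbf{Limit step.} Your appeal to clause~$(iv)$ is misplaced: that clause says $\mathbb P_\gamma$ is a direct limit only when $\gamma$ is inaccessible or of cofinality $\omega_1$, whereas here $\gamma$ is an arbitrary limit point of $E$ and may have countable cofinality without being inaccessible; in that case $\mathbb P_\gamma$ properly contains $\mathbb P_{<\gamma}$ (it has $\gamma$-models as side conditions). Even where $(iv)$ does apply, knowing that some $q \leq p$ in $\mathbb P_{\gamma'}$ decides $\tau$ does not put the decided value in $M(G_{\gamma'})$; you again need an $r \in D$ inside $M(G_\eta)$ for some $\eta < \gamma$ that is compatible with $q$. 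The paper handles this with a stabilization claim (Claim~\ref{stability}): for each $N \in \M_q$ one finds a threshold $\bar{\delta}$ beyond which no extension can make $N$ active at any ordinal in $(\bar{\delta},\gamma]$. This freezes which models are relevant, after which one can run an amalgamation argument parallel to the successor case, but now also using Lemma~\ref{extending-w_p} to rebuild the working part on $[\eta,\bar{\gamma})$ so that the merged condition satisfies clause~(2) of Definition~\ref{main-def}. Your proposal has no analogue of this stabilization step, and without it the amalgamation cannot even be set up.
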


\begin{proof} We show this by induction on $\gamma$. The basic idea is that the only ground model objects
that are added to $M$ by $G_\gamma$ are added by $G_{{\mathbb Q}_\xi}$, for some $\xi \in E\cap \gamma$,
where $G_{{\mathbb Q}_\xi}$ is the generic over $\mathbb Q_\xi$  induced by $G_\gamma$.
We proceed by cases according to whether $\gamma$ is a successor or a limit point of $E$.

\medskip

\noindent {\bf Case 1.} Let us first assume that $\gamma$ is a successor
point of $E$, say $\gamma ={\rm next}(\beta)$, for $\beta \in E$.
Let us pick an arbitrary $V$-generic filter $G_\beta$ over $\mathbb P_\beta$ containing $p\rest \beta$
and show that $M(G_\gamma)\cap \omega_1= M(G_\beta)\cap \omega_1$, for any
$V$-generic filter $G_\gamma$ over $\mathbb P_\gamma$ which extends $G_\beta$ and contains $p$.
We may assume that $\gamma$ and hence also $\beta$ belongs to $M(G_\beta)$,
otherwise we have that $M(G_\gamma)= M(G_\beta)$. Also, we may assume that $\beta\in \dom(w_p)$.
%Let $\mathbb Q_\beta = {\rm val}_{G_\beta}(\dot{\mathbb Q}_\beta)$.
Since $M\rest \gamma \in {\rm val}_{G_\beta}(\dot{\M}_p^\gamma)$ we have that ${\rm val}_{G_\beta}(w_p(\beta))$ is
$(M[G_\beta],\mathbb Q_\beta)$-semigeneric, where $\mathbb Q_\beta$ is ${\rm val}_{G_\beta}(\dot{\mathbb Q}_\beta)$.
Hence, if $G_{\mathbb Q_\beta}$ is $V[G_\beta]$-generic over $\mathbb Q_\beta$ containing
${\rm val}_{G_\beta}(w_p(\beta))$ and
$G_{\beta+1}$ is the associated $V$-generic over $\mathbb P_{\beta +1}$, then
$M(G_{\beta+1})\cap \omega_1= M(G_\beta)\cap \omega_1$ and so, by the inductive hypothesis,
$M(G_{\beta+1})\cap \omega_1= M\cap \omega_1$.
Let $\mathbb R_\gamma$ be the quotient forcing $\mathbb P_\gamma/G_\beta$.
We will be done once we establish the following.

\begin{cla}\label{successor-generic} $p\forces_{\mathbb R_\gamma} M(\Gdot_\gamma)= M(\Gdot_{\beta+1})$.
\end{cla}

\begin{proof}
Let $\tau \in M[G_\beta]$ be an $\mathbb R_\gamma$-name for an element of
the ground model and let $D$ be the set of conditions deciding the value of $\tau$.
Then $D$ is a dense open subset of $\mathbb R_\gamma$.
It suffices to show that for every $q\in \mathbb R_\gamma$ with $q\leq p$  there is a condition $r\in D$
and  $s\leq q,r$ such that $s\! \restriction \! (\beta+1) \forces_{\mathbb Q_\beta} r \in M(\Gdot_{\beta+1})$.
Then $M(\Gdot_{\beta+1})$ will be forced by $s$ to contain the value of $\tau$, as decided by $r$.
Consider now one such $q\leq p$ and assume, without loss of generality, that $q\in D$.
Let $w={\rm val}_{G_\beta}(w_q(\beta))$  and let $\M = {\rm val}_{G_\beta}(\dot{\M}_q^{\gamma})$.
Since $q$ is a condition, we know that $q\rest (\beta +1)$ forces $\M$ to be a weak $\in_\gamma$-chain at $\beta +1$.
Hence, for every $Q,R\in \M$, if $Q\cap \omega_1=R\cap \omega_1$ then $Q=R$ and
if $Q\cap \omega_1< R\cap \omega_1$ then $w\forces_{\mathbb Q_\beta}Q\in_{\gamma}R(\Gdot_{\beta +1})$.
Let $\{ M_0,\ldots, M_{n-1}\}$ be the enumeration of the members of $\M$ in the order given by their intersections
with $\omega_1$. Suppose $M\rest \gamma=M_k$ and let $\N= \{ M_0,\ldots,M_{k-1}\}$.
%Note that $w\forces_{\mathbb Q_\beta} M_i\in_\gamma M_j(\Gdot_{\beta+1})$, for all $i<j$.
We claim that $w\forces_{\mathbb Q_\beta} M_i\in M(\Gdot_{\beta+1})$, for all $i<k$,
and hence $w\forces_{\mathbb Q_\beta} \N \in M(\Gdot_{\beta+1})$.
To see that, pick an arbitrary $V[G_\beta]$-generic filter $G_{\mathbb Q_\beta}$ over $\mathbb Q_\beta$ containing
$w$ and let $G_{\beta+1}$ be the associated $V$-generic filter over $\mathbb P_{\beta +1}$.
For each $i<k$ we know that $M_i \in_\gamma \! \! (M\! \restriction \! \gamma)(G_{\beta+1})$, so there is
$M_i^\#\in M(G_{\beta+1})$ such that $M_i^\# \! \restriction \! \gamma = M_i$.
However, $\gamma \in M(G_{\beta+1})$, so we can compute $M_i^\# \! \restriction \! \gamma$
inside $M(G_{\beta+1})$. It follows that $M_i \in M(G_{\beta+1})$, for all $i<k$.
So, now we can find  a $\mathbb Q_{\beta}$-name $\dot{\N}\in M[G_\beta]$ for a finite collection of $\gamma$-models
such that $w \forces_{\mathbb Q_\beta} \dot{\N}= \check{\N}$.
We define  $D^*$ to be the set of all $u\in \mathbb Q_\beta$ such that
\medskip
\begin{itemize}
\item[$(*)$] there is $r\in D$ such that ${\rm val}_{G_\beta}(w_r(\beta))=u$ and
$u\forces_{\mathbb Q_\beta} \dot{\N} \subseteq \M_r$.
\end{itemize}
\medskip
\noindent  Clearly, $D^*$ is an open subset of $\mathbb Q_\beta$.
It may not be dense, so we let $(D^*)^{\perp}$ be the collection
of all $u\in \mathbb Q_\beta$ that are incompatible with all members of $D^*$ and we let
$D^{**}=D^*\cup (D^*)^{\perp}$. Then $D^{**}$ is a dense subset of ${\mathbb Q_\beta}$.
Working in $M[G_\beta]$, fix a maximal antichain $A$ contained in $D^{**}$ and, for each $u\in A\cap D^*$,
a condition $r_u\in D$  witnessing  ($*$) for $u$.  Let $\dot{r}=\{ (u,\check{r}_u) : u \in A\cap D^*\}$.
Then $\dot{r}$ is a $\mathbb Q_\beta$-name which is forced to be either the empty set or
 an element of $D$. Moreover,  $\dot{r}\in M[G_\beta]$.
Let $u$ be an element of $A$ which is compatible with $w$ and let $w^*$ be a common extension of
$w$ and $u$. Let us show that $u\in D^*$.  To see this, it suffices to show that $u\notin (D^*)^{\perp}$.
Otherwise $w^*$ would also be in $(D^*)^{\perp}$, but $w^*$ extends $w$ which is in $D^*$ as witnessed
by $q$ itself, a contradiction. Notice that $w^*\forces_{\mathbb Q_\beta} \dot{r}=\check{r}_u$.
To simplify notation, let $r=r_u$.  We claim that $q$ and $r$ are compatible in $\mathbb R_\gamma$.
To see this, first pick a $\mathbb P_\beta$-name $\dot{w}^*$ for $w$
and let $t$ be a common extension of $q$ and $r$ in $G_\beta$ which forces
all the relevant facts about $q$, $r$ and $\dot{w}^*$. In particular, $t$ decides $\dot{\M}_q^{\gamma}$ and
$\dot{\M}_{r}^{\gamma}$.
Now, let $\M_s=\M_t \cup \M_q\cup \M_r$, $w_s= w_t \cup \{ (\beta,\dot{w}^*)\}$  and let  $s=(\M_s,w_s)$.
We check that $s\in \mathbb P_\gamma$. Since $s\rest \beta=t$
which is in $G_\beta$ and extends $q\rest \beta$ and $r\rest \beta$ this implies that $s\in \mathbb R_\gamma$.
It suffices to verify that Definition \ref{main-def} holds for $s$ at $\gamma$.
Now, $t$ forces in ${\mathbb P_\beta}$  that $\dot{\M}_s^{\gamma}= \dot{\M}_q^{\gamma}\cup \dot{\M}_r^{\gamma}$
and it also forces that $\dot{w}^*$ is below $w_q(\beta)$ and below $w_r(\beta)$.
Hence it forces that $\dot{w}^*$ is $(N[\Gdot_\beta],\dot{\mathbb Q}_\beta)$-semigeneric,
for all $N\in \dot{\M}_s^{\gamma}$. This verifies condition (2) of Definition \ref{main-def}.
To see that condition (1) holds take $Q\in \M_q^{t,\gamma}$ and $R\in \M_r^{t,\gamma}$.
If $Q\cap \omega_1 < M\cap \omega_1$ then $Q$ belongs to $\N$ and
$s\rest (\beta +1) \forces_{\mathbb P_{\beta+1}} \N \subseteq \M_r$,
so condition (1) holds for $Q$ and $R$ since $r\in \mathbb P_\gamma$.
If $Q\cap \omega_1 = M \cap \omega_1$ then $Q\cong_\gamma M$ and we know
that $t\forces_{\mathbb P_\beta} \M_r \subseteq M(\Gdot_\beta)$.
Hence $t \forces_{\mathbb P_\beta} R \in_\gamma Q(\Gdot_\beta)$.
Finally, if $M\cap \omega_1 < Q\cap \omega_1$
then $s\rest (\beta+1)\forces_{\mathbb P_{\beta +1}} M\in_\gamma Q(\Gdot_{\beta +1})$
and hence $s\rest (\beta+1)\forces_{\mathbb P_{\beta +1}} R\in_\gamma Q(\Gdot_{\beta +1})$,
 by the transitivity of $\in_\gamma$.
Therefore, we have condition (1) in this case as well.
Finally, it is clear from the definition that $s$ extends $q$ and $r$.
This establishes Claim \ref{successor-generic} and completes the proof in {\bf Case 1}.

\end{proof}

\medskip

\noindent {\bf Case 2.} Assume now that  $\gamma$ is a limit point of $E$. Before we start we need to establish
a technical fact.
%let us prove a technical lemma.
%Recall that by the inductive hypothesis we know that $\mathbb P_\beta$ is semiproper, for all $\beta < \gamma$.

\begin{cla}\label{stability}
%Suppose $\delta\leq \gamma$ is a limit point of $E$, $q\in \mathbb P_\delta$ and $N\in \M_q$.
%Then there is $\bar{\delta} \leq \delta$ which is also a limit of $E$, $\beta \in E\cap \bar{\delta}$ and
%$t\leq q\rest \beta$ which makes $\beta$ attainable from $N$ and
%$(q\wedge t)\rest \eta \forces_{\mathbb P_\eta} \sup(N(\Gdot_\eta)\cap \delta)= \bar{\delta}$, for all $\eta \in E\cap [\beta,\delta)$.
Suppose $\delta\leq \gamma$ is a limit point of $E$, $q\in \mathbb P_\delta$ and $N\in \M_q$.
Then there is an ordinal $\bar{\delta}\leq \delta$ which is also a limit point of $E$ and
$t\in \mathbb P_{<\bar{\delta}}\rest q$ such that $t$ makes $N$ active at $\bar{\delta}$ and
no condition in $\mathbb P_{<\delta}\rest (q\wedge t)$ makes $N$ active at any ordinal $\eta\in  E\cap (\bar{\delta},\delta]$.
\end{cla}

\begin{proof} If $N\cap \ORD$ is contained in $\delta$ then  we let $\bar{\delta}$ be $\sup(N\cap \delta)$.
By Lemma \ref{E_A-cofinal}, $\bar{\delta}$ is also a limit member of $E$. Now, if
$\beta \in E$ and $G_\beta$ is $V$-generic over $\mathbb P_\beta$  then $\sup (N(G_\beta)\cap \ORD)=\bar{\delta}$,
hence  $N$ cannot be made active at any ordinal $\eta \in E \cap (\bar{\delta},\delta]$, and
we can let $t$ be any condition in $\mathbb P_{<\bar{\delta}}\rest q$.
Suppose now that $N$ has some ordinal $\geq \delta$ and let $\delta^*$ be the least ordinal $\geq \delta$
that can  belong to $N(G_\beta)$, for some $\beta \in E\cap \delta$ and some $V$-generic $G_\beta$ over
$\mathbb P_\beta$ such that $q\rest \beta \in G_\beta$.
By taking the least possible $\beta$, we may assume that $\beta \in N(G_{<\beta})$ and hence we can form the model $N[G_\beta]$.
Let $\lambda$ be the cofinality of $\delta^*$ in $N[G_\beta]$.
If  $\lambda < \delta^*$, i.e. $\delta^*$ is singular in $N[G_\beta]$, then $\lambda < \delta$, hence $\lambda \in N[G_\beta]\cap \delta$.
Now,  by increasing $\beta$ and going to a further generic extension, we may assume that $\lambda < \beta$.
Since $\mathbb P_\beta$ collapses all cardinals $<\beta$ to have cardinality at most $\omega_1$,
it follows that we may assume that the cofinality of $\delta^*$ in $N[G_\beta]$
is either $\omega$, $\omega_1$, or $\delta^*$. Suppose $\delta^*$ is regular in $N[G_\beta]$.
Then, since $\delta$ is strong limit and $\delta^*$ is the least ordinal of $N[G_\beta]\setminus \delta$,
it follows that $\delta^*$ is also strong limit and hence inaccessible in $N[G_\beta]$.
% By increasing $\beta$ again we may assume that $\beta \in N(G_\beta)$.
Now, let $\bar{\delta}= \sup (N[G_\beta]\cap \delta)$. By elementarity of $N(G_\beta)$ in ${\rm Hull}(N,V_\delta)$,
$\bar{\delta}$ is a limit point of $E$.
Now, let $t\in G_\beta$ be a condition  that makes $\beta$ attainable from $N$,
forces that  $\delta^*\in N[\Gdot_\beta]$ and decides  whether the cofinality of $\delta^*$ in $N[\Gdot_\beta]$
is $\omega$, $\omega_1$, or $\delta^*$. Since $q\rest \beta \in G_\beta$, we may also assume that $t\leq q\rest \beta$.
We claim that these $\bar{\delta}$ and $t$ work.
It suffices to show the following.

\begin{subclaim*}
$(q\wedge t)\rest \eta \forces_{\mathbb P_\eta} \sup(N(\Gdot_\eta)\cap \delta)= \bar{\delta}$, for all $\eta \in E\cap [\beta,\delta)$
\end{subclaim*}

\begin{proof}
To see this, note that if the cofinality of $\delta^*$ in $N[G_\beta]$ is $\omega$ then $N[G_\beta]$
contains a cofinal $\omega$-sequence in $\delta^*$.
It follows that $\delta^*= \delta$ and $N[G_\beta]\cap \delta$ is cofinal in $\delta$,
hence $\bar{\delta}=\delta$. Therefore, in this case, $t\forces_{\mathbb P_\eta} \sup (N(\Gdot_\eta)\cap \delta)=\delta$,
for all $\eta \in E\cap [\beta,\delta)$.
Suppose now $\delta^*$ is inaccessible in $N[G_\beta]$. Then, for every  $\eta \in N[G_\beta] \cap E\cap \delta$
which is bigger than $\beta$,  the forcing notion $\mathbb P_\eta/G_\beta$ is smaller than $\delta^*$, hence if
$G_\eta$ is $V$-generic over $\mathbb P_\eta$ extending $G_\beta$ then
$\sup(N[G_\eta]\cap \delta^*)= \sup(N[G_\beta]\cap \delta^*)=\bar{\delta}$.
Moreover, $\delta^*$ remains inaccessible in $N[G_\eta]$ and so the same properties holds for $N[G_\eta]$
in place of $N[G_\beta]$.
Finally, suppose $\delta^*$ has cofinality $\omega_1$ in $N[G_\beta]$.
Suppose $\eta \in E\cap \delta$ with $\eta >\beta$ and $G_\eta$ is a $V$-generic filter over $\mathbb P_\eta$
extending $G_\beta$ and such that $(q\wedge t) \rest \eta \in G_\eta$.
By our inductive assumption, we have that $N(G_\eta)\cap \omega_1 = N\cap \omega_1$. Therefore, we also have that
$N(G_\eta)\cap \omega_1 = N(G_\beta)\cap \omega_1$. Since $\delta^*$ has cofinality $\omega_1$ in
$N[G_\beta]$, it follows that we also have $\sup(N(G_\eta)\cap \delta^*)= \sup(N(G_\beta)\cap \delta^*)=\bar{\delta}$.
This completes the proof of the subclaim and Claim \ref{stability}.
\end{proof}
\end{proof}

Back to the proof of Theorem \ref{main-thm} in the case $\gamma$ is a limit point of $E$.
First, note that if $p$ has no extension that makes $\gamma$ attainable from $M$ then
the statement we are trying to prove follows by induction.
So, by strengthening $p$ we may assume that it makes $\gamma$ attainable from $M$.
By applying Claim \ref{stability} for $\delta=\gamma$ and strengthening $p$ again  we may fix $\bar{\gamma}\leq \gamma$
which is a limit of $E$ and $\beta \in E\cap \bar{\gamma}$ such that $p\! \restriction \beta$ makes $M$ active at $\bar{\gamma}$ and
no condition $q\in \mathbb P_{<\gamma}\! \restriction \! p$ makes $M$ active at any ordinal in $E\cap (\bar{\gamma},\gamma]$.
We may also assume that $p\! \restriction \! \beta$ makes $\beta$ and $\gamma$ attainable from $M$.
% and  $p\! \restriction \! \beta \forces_{\mathbb P_\beta} \sup(M[\Gdot_\beta]\cap \bar{\gamma})=\bar{\gamma}$.
%Note that then $p\! \restriction \! \beta$ also forces that $E\cap M[\Gdot_\beta]$ is also cofinal in $\bar{\gamma}$.
We plan to show
\[
p \forces_{\mathbb P_\gamma} M(\Gdot_\gamma) = \bigcup \{ M(\Gdot_\eta): \eta \in E\cap \bar{\gamma}\}.
\]
By the inductive assumption this implies the statement we wish to prove.
% $p\forces_{\mathbb P_\gamma} M(\Gdot_\gamma)\cap \omega_1$ is $(M,\mathbb P_\gamma)$-semigeneric.
Let us pick an arbitrary $V$-generic filter $G_\beta$ over $\mathbb P_\beta$ such that
$p\rest \beta \in G_\beta$ and let us work in $V[G_\beta]$ for a while. Since $p\rest \beta$ makes
$\beta$ and $\gamma$ attainable from $M$ we can form the model $M[G_\beta]$ and we know that $\gamma \in  M[G_\beta]$.
Moreover, since $p\rest \beta$ makes $M$ active at $\bar{\gamma}$ we know that
$E\cap M(G_\beta) \cap \bar{\gamma}$ is cofinal in $\bar{\gamma}$.
Fix a $\mathbb P_\gamma/G_\beta$-name $\tau \in M[G_\beta]$ for an element  of $V$ and let
$D$ be the set of conditions in $\mathbb P_\gamma/G_\beta$ deciding  the value of $\tau$.
Then $D$ is a dense open subset of $\mathbb P_\gamma/G_\beta$ and belongs to $M[G_\beta]$.
It suffices to show that for every $q\in \mathbb P_\gamma/G_\beta$ with $q\leq p$  there is $r\in D$
and  $s\in \mathbb P_\gamma/G_\beta$ such that $s\leq q,r$ and
$s \rest \eta \forces_{\mathbb P_{\eta}/G_\beta} r \in M(\Gdot_{\eta})$,
for some $\eta \in E\cap \bar{\gamma}$.
Then $M(\Gdot_{\eta})$ will be forced by $s$ to contain the value of $\tau$, as decided by $r$.
Consider now one such $q\leq p$ and assume, without loss of generality, that  $q\in D$.
By applying iteratively Claim \ref{stability} for each $N\in \M_q$ and a genericity argument,
we can find an ordinal $\eta \in E\cap M(G_\beta) \cap \bar{\gamma}$ and $t\in \mathbb P_\eta/G_\beta$
such that $t\leq q\rest \eta$ and  such that, for every $N\in \M_q$,
either $t$ makes $N$ active at $\bar{\gamma}$ or no extension of $q\wedge t$ in
$\mathbb P_{\bar{\gamma}}/G_\beta$ makes $N$ active at any ordinal in $E\cap (\eta,\bar{\gamma}]$.
By increasing $\eta$, we may also assume that $\dom(w_q)\cap \bar{\gamma}$ is a subset of $\eta$.
Let $\M=\M_q^{t,\bar{\gamma}}$.
%be the collection of all the $N\rest \gamma$, for  $N\in \M_q$ which are made
%active at $\bar{\gamma}$ by $t$.
Since $q\rest \beta$ already makes $M$ active at $\bar{\gamma}$ we have that $M\rest \bar{\gamma}\in \M$.
%Since $(q\wedge t)\rest \bar{\gamma}$ belongs to $\mathbb P_{\bar{\gamma}}$,
By condition (1) of Definition \ref{main-def} and increasing $\eta$ again if necessary, we may
assume that $(q\wedge t)\rest \eta$ forces $\M$ to be a weak $\in_{\bar{\gamma}}$-chain at $\eta$.
%we also know that, for sufficiently large $\eta\in E\cap \bar{\gamma}$  and any $V$-generic filter $G_\eta$
%extending $G_\beta$ with  $(q\wedge t)\rest \eta \in G_\eta$, if $P,Q\in \M$ and
%$P\cap \omega_1= Q\cap \omega_1$ then $P=Q$ and if $P\cap \omega_1 < Q\cap \omega_1$
%then $P\in_{\bar{\gamma}} Q(G_\eta)$.
% Let $\N$ be the collection of all those $N\in \M$ such that $N\cap \omega_1 < M\cap \omega_1$.
Let $\{ M_0,\ldots, M_{n-1}\}$ be the enumeration of the members of $\M$ given by their intersections with $\omega_1$.
Suppose $M\rest \bar{\gamma}= M_k$.
%Now pick a sufficiently large $\eta$. Since $M(G_\beta) \cap E$ is cofinal in $\bar{\gamma}$ we can make sure
%that $\eta \in M(G_\beta)$.
Now take any $V$-generic filter $G_\eta$ over $\mathbb P_\eta$ extending $G_\beta$
and containing $(q\wedge t)\! \restriction \! \eta$. Note that, since $\eta \in M(G_\beta)$ we can form the model  $M[G_\eta]$.
For each $i<k$ we know that $M_i \in_{\bar{\gamma}} \! \! (M\! \restriction \! \bar{\gamma})(G_{\eta})$, so there is
$M_i^\#\in M(G_{\eta})$ such that $M_i^\# \! \restriction \! \bar{\gamma} = M_i$.
Let $\bar{M}_i$ be $M_i^\# \! \restriction \! \gamma$. Since $\gamma \in M(G_{\eta})$,
we can compute $\bar{M}_i$ inside $M(G_\eta)$, for all $i<k$.
Let $\N= \{ \bar{M}_i: i <k\}$. It follows that $\N\in M(G_\eta)$. Now, note that $\bar{M}_i \rest \bar{\gamma}=M_i$,
for all $i<k$. Moreover, since $q\! \restriction \! \eta \forces_{\mathbb P_\eta} \bar{M}_i \in_\gamma M(\Gdot_\eta)$
and no condition compatible with $q$ can make $M$ active at any ordinal in $E\cap (\bar{\gamma},\gamma]$
then the same must be true for $\bar{M_i}$, for all $i<k$.
Therefore, if we let $\M_{\bar{q}}= \M_q \cup \N$ and $w_{\bar{q}}=w_q$, then $\bar{q}$ is a condition.
In fact, it is equivalent to $q$ and $\bar{q}\rest \bar{\gamma}=q\rest \bar{\gamma}$.
Let $\bar{D}=D\cap G_\eta$. Then $\bar{D}$ is a dense open subset of $\mathbb P_\gamma/G_\eta$
and belongs to $M[G_\eta]$. Now, recall that $M$ is an $\alpha$-model and $\alpha >\gamma$.
Then  $M[G_\eta]$ is an elementary submodel of ${\rm Hull}(M,V_\alpha)[G_\eta]$.
Since $\alpha >\gamma$, we know that $\bar{q}\in {\rm Hull}(M,V_\alpha)[G_\eta]$.
Since $\bar{q}\rest \eta$, we know that $\bar{q}\in G_\eta$.
Now, by elementarity of $M[G_\eta]$ in ${\rm Hull}(M,V_\alpha)[G_\eta]$, we can find $r\in \bar{D}\cap M(G_\eta)$
such that $r\rest \eta \in G_\eta$  and $\N\subseteq \M_r$.

\begin{cla} $q$ and $r$ are compatible.
\end{cla}

\begin{proof}
We need to define a common extension $s$ of $q$ and $r$.
Let us first fix some $u\in G_\eta$ extending $(q \wedge t)\rest \eta$ and $r\rest \eta$ such
that $u\forces_{\mathbb P_\eta} r \in M(\Gdot_\eta)$.
Notice that $\dom(w_q)\cap [\eta,\bar{\gamma})=\emptyset$, while $w_r\in M[G_\eta]$.
We know that $\sup(M[G_\eta]\cap \gamma) = \bar{\gamma}$, so $\dom(w_r)\subseteq \bar{\gamma}$.
We first define a function $w$ on $\dom(w_r)\setminus \eta$.
So, suppose  $\xi \in \dom(w_r) \setminus \eta$ and let $\nu={\rm next}(\xi)$.
Note that $u\forces_{\mathbb P_\eta} \check{w}_r \in M(\Gdot_\eta)$.
So, if we let $q^*=q \wedge u$ we can apply Lemma \ref{extending-w_p} to find a $\mathbb P_{\xi}$-name
$w(\xi)$ for a condition in $\dot{\mathbb Q}_\xi$
such that $q^*\rest \xi$ forces that $w(\xi)$ is stronger than $w_r(\xi)$ and is
$(N[\Gdot_\xi],\dot{\mathbb Q}_\xi)$-semigeneric, for all $N$ in $\dot{\M}_q^\nu$ with
$M\cap \omega_1 \leq N\cap \omega_1$.
Now, let $\M_s=\M_u \cup \M_q \cup \M_r$ and  $w_s= w_u \cup w \cup w_q\rest [\bar{\gamma},\gamma)$.
We claim that $s=(\M_s,w_s)$ belongs to $\mathbb P_\gamma$ and is a common extension
of $q$ and $r$.
We show that $s\rest \xi$ is a condition, by induction on $\xi \in E\cap (\gamma +1)$.
Note that $s\rest \eta =u$ which is a condition in $\mathbb P_\eta$.  This starts the induction.
Suppose first that $\xi \in E\cap (\eta,\bar{\gamma}+1)$ and we have verified that $s\rest \zeta$ is a condition,
for all $\zeta \in E\cap \xi$.
If $\xi={\rm next}(\zeta)$ for some $\zeta \in \dom(w_r)$ we first check condition (2)
of Definition \ref{main-def}. To begin, note that  $r\rest \zeta$  forces that
$w_r(\zeta)$ is $(N[\Gdot_\zeta],\dot{\mathbb Q}_\zeta)$-semigeneric,
for all $N\in \dot{\M}_r^\xi$ and $q^*\rest \zeta$ forces that $w(\zeta)$ extends $w_r(\zeta)$ and
is $(N[\Gdot_\zeta], \dot{\mathbb Q}_\zeta)$-semigeneric, for all $N\in \dot{\M}_q^\xi$
with $M\cap \omega_1 \leq N\cap \omega_1$. Also, $q^*\rest \zeta$ forces that if some $N\in \M_q$
is active at $\xi$ then $N\cong_\xi \bar{N}$, for some $\bar{N}\in \N$.
Since $\N$ is contained in $\M_r$, it follows that $s\rest \zeta$ forces in $\mathbb P_\zeta$
 that $w(\xi)$ is $(N[\Gdot_\zeta], \dot{\mathbb Q}_\zeta)$-semigeneric, for all $N\in \dot{\M}_s^\zeta$.
Let us now check condition (1) of Definition \ref{main-def} for $s$ at $\xi$.
Let $\zeta_0 \in E\cap \xi$ witness condition (1) for $r \! \restriction \! \xi$ and
$\zeta_1 \in E\cap \xi$ witness (1) for $q^*\rest \xi$. Let $\zeta$ be the maximum of  $\zeta_0,\zeta_1$ and $\eta$.
We check that that for every $\rho \in E^*\cap (\zeta,\xi)$ we have that
$s\rest \rho$ forces in  $\mathbb P_\rho$ that $\dot{\M}_s^{\rho,\xi}$ is a weak $\in_\xi$-chain at $\xi$.
To see this pick an arbitrary $z\in \mathbb P_\rho$ with $z\leq s\rest \rho$
such that $z$ decides $\dot{\M}_s^{\rho,\xi}$. %Note that $\M_s^{z,\xi}=\M_q^{z,\xi}\cup \M_r^{z,\xi}$.
Let $\M^*$ be the set of all $N\in \M_q^{z,\xi}$ such that $N\cap \omega_1 <M\cap \omega_1$.
Note that $\M^*\subseteq \N\rest \xi$ and $\N\subseteq \M_r$, hence $\M^*\subseteq \M_r^{z,\xi}$.
Therefore, $\M_s^{z,\xi} = \M_r^{z,\xi}\cup (\M_q^{z,\xi}\setminus \M^*)$.
On the other hand, $u$ forces in $\mathbb P_\eta$ that $\M_r \in M(\Gdot_\eta)$, hence
it forces that  $N\rest \xi \in_\xi M(\Gdot_\eta)\rest \xi$, for all $N\in \M_r$.
Finally, $z$ forces that $\M_q^{z,\xi}\setminus \M^*$ is a a weak $\in_\xi$-chain
at $\rho$, since it is compatible with $q$ which is a condition.
Now, suppose $\xi \in E\cap (\bar{\gamma},\gamma]$. Since $u\forces_{\mathbb P_\eta} r\in M(\Gdot_\eta)$,
and no condition compatible with $q^*$ can make $M$ active at any ordinal in $E\cap (\bar{\gamma},\gamma]$,
then no condition compatible with $q^*$ can make any $N\in\M_r$ active at any ordinal
in $E\cap (\bar{\gamma},\gamma]$.
Therefore, the fact that $q^*$ satisfies Definition \ref{main-def} implies
that so does $s$ at ordinals in $E\cap (\bar{\gamma},\gamma]$.
This completes the verification that $s\in \mathbb P_\gamma$.
Clearly, $s$ is then a common extension of $q$ and $r$,
as required.
\end{proof}

This completes the proof of Theorem \ref{main-thm}.
\end{proof}

\begin{defi}\label{star-def} For $\alpha \in E^*$, let $\mathbb P^*_{\alpha}$ be
the set of all conditions $p\in \mathbb P_\alpha$ such that there are no $\gamma$-models
in $\M_p$, for any $\gamma$ which is either inaccessible or has cofinality $\omega_1$.
\end{defi}

\begin{lemma}\label{density} ${\mathbb P}^*_\alpha$ and $\mathbb P_\alpha$
are equivalent forcing notions, for all $\alpha \in E^*$.
\end{lemma}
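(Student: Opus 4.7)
The plan is to show that $\mathbb P^*_\alpha$ is a dense subset of $\mathbb P_\alpha$ in the inherited ordering, which immediately yields equivalence of the two forcing notions. Given $p\in \mathbb P_\alpha$, I proceed by iteratively replacing each model of $\M_p$ at a \emph{bad} level $\gamma\in E$ (i.e., $\gamma$ inaccessible or of cofinality $\omega_1$) by a new model at a \emph{good} level $\gamma'$ that projects down to it. Since the ordering in Definition \ref{main-def} demands only that each $\gamma$-model $M\in\M_p$ be witnessed in $\M_q$ by some $N$ with $M=N\rest\gamma$, such a replacement automatically yields $q\le p$.

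Fix $M\in \M_p\cap \mathscr C_\gamma$ with $\gamma$ bad. The limit points of $E$ of cofinality $\omega$ are cofinal in $\kappa$, so I may choose $\gamma'\in E$ with $\gamma<\gamma'\le\alpha$ that is neither inaccessible nor of cofinality $\omega_1$, and further arrange that the open interval $(\gamma,\gamma')$ contains no element of $\dom(w_p)$ nor the level of any other model of $\M_p$; this is possible since $p$ is finite. To build $N\in\mathscr C_{\gamma'}$ that is $\gamma$-isomorphic to $M$ (so that $N\rest\gamma=M$ by Proposition \ref{transitivity-projections}), I take a sufficiently large regular $\theta$ and a countable $X\prec H(\theta)$ with $M\subseteq X$ and $\{M,p,\gamma'\}\subseteq X$, then extract $N$ by a Skolem-hull argument inside $X$, using Lemmas \ref{E_A-cofinal} and \ref{Hull(M,X)-elementary}, that ensures ${\rm Hull}(N,V_{\gamma'})$ is transitive and admits a function $U^{\mathcal A}$ placing it in $\mathscr A_{\gamma'}$. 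Because $M\subseteq N$, $\omega_1\subseteq V_\gamma$ is fixed by the collapse of ${\rm Hull}(N,V_\gamma)$, and the structure of ${\rm Hull}(N,V_\gamma)$ is arranged to be isomorphic to ${\rm Hull}(M,V_\gamma)$ with $N$ mapping onto $M$, one obtains $N\cong_\gamma M$ and hence $N\rest\gamma=M$; in particular $N\cap\omega_1=M\cap\omega_1$.

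Set $\M_q:=(\M_p\setminus\{M\})\cup\{N\}$. Define $w_q$ to agree with $w_p$ on $\dom(w_p)\setminus\{\gamma\}$; at $\gamma$ (if $\gamma\in\dom(w_p)$), a routine semiproper-forcing argument akin to Lemma \ref{extending-w_p} produces $w_q(\gamma)\le w_p(\gamma)$ that is additionally $(N[\Gdot_\gamma],\dot{\mathbb Q}_\gamma)$-semigeneric. Then $q\le p$ by construction. For $q\in\mathbb P_\alpha$, Proposition \ref{transitivity-projections} gives $N\rest\delta=M\rest\delta$ for every $\delta\le\gamma$ in $E$, so the names $\dot\M_q^{\cdot,\delta}$ and $\dot\M_p^{\cdot,\delta}$ coincide at these levels and both clauses of Definition \ref{main-def} are inherited from $p$. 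At $\delta>\gamma'$ no new contribution arises. The only novelty occurs at $\delta\in E\cap(\gamma,\gamma']$, where $N\rest\delta$ is a new entry of $\dot\M_q^{\cdot,\delta}$; the avoidance of $\dom(w_p)$ on $(\gamma,\gamma')$ and the strengthening of $w_q(\gamma)$ handle clause (2), leaving clause (1), the weak $\in_\delta$-chain property at these new levels, to be verified.

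The main obstacle is precisely this last verification. Because $N$ was constructed as a countable elementary submodel of $H(\theta)$ containing every other model of $\M_p$ as a parameter, a genericity argument in the spirit of Claim \ref{stability} yields that $N$ is forced, at appropriate lower stages $\xi$, to contain in $N(\Gdot_\xi)$ precisely the projections needed to place $N\rest\delta$ correctly in the chain; together with the identity $N\cap\omega_1=M\cap\omega_1$ and the transitivity of $\in_\delta$, this supplies the required chain relations. Iterating the replacement finitely often over all bad models in $\M_p$ then produces the desired $q\in\mathbb P^*_\alpha$ with $q\le p$.
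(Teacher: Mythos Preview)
Your overall strategy---showing that $\mathbb P^*_\alpha$ is dense in $\mathbb P_\alpha$---cannot succeed, because $\mathbb P^*_\alpha$ is \emph{not} dense in $\mathbb P_\alpha$ in general. Suppose $\alpha\in E$ is itself inaccessible or of cofinality $\omega_1$, and $p\in\mathbb P_\alpha$ has some $\alpha$-model $M\in\M_p$. By the definition of the order, any $q\le p$ must contain an $N\in\M_q$ with $M=N\rest\alpha$; but $N\rest\alpha$ is only defined when $N$ is a $\beta$-model for some $\beta\ge\alpha$, and $\M_q\subseteq\mathscr C_{\le\alpha}$ forces $\beta=\alpha$. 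Hence $\M_q$ necessarily contains an $\alpha$-model, so $q\notin\mathbb P^*_\alpha$. Your lifting step ``choose $\gamma'\in E$ with $\gamma<\gamma'\le\alpha$'' has no solution when $\gamma=\alpha$, and this is not a boundary technicality: it is exactly the case that matters for the $\kappa$-chain-condition argument following the lemma.

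The paper proceeds in the opposite direction: it projects \emph{down}, not up. One first passes to a dense set $\bar{\mathbb P}_\alpha$ of conditions $p$ carrying a distinguished top model $M_p$ (via Lemma~\ref{adding-M}); for each bad $\gamma\in E\cap(\alpha+1)\cap M_p$ one lets $\gamma_p=\sup(\gamma\cap M_p)<\gamma$, which lies in $E$ and has countable cofinality by Lemma~\ref{E_A-cofinal}. The crucial point---and this is where Theorem~\ref{main-thm} is used---is that no condition compatible with $p$ can make $M_p\rest\alpha$ (hence any model in $\M_p$) active at any $\eta\in E\cap(\gamma_p,\gamma]$: for inaccessible $\gamma$ this is a cardinality bound on $\mathbb P_\xi$, and for $\gamma$ of cofinality $\omega_1$ it follows from the semigenericity conclusion of Theorem~\ref{main-thm}. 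Therefore replacing each bad $\gamma$-model $N\in\M_p$ by $N\rest\gamma_p$ yields a condition $\pi_\alpha(p)\in\mathbb P^*_\alpha$ that is \emph{equivalent} to $p$ (not merely $\le p$), and equivalence of the forcing notions follows. Note in particular that $\pi_\alpha(p)\le p$ fails in general, so density is simply the wrong target.

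Even setting aside the fatal top-level obstruction, your lifting construction of $N\in\mathscr C_{\gamma'}$ with $N\rest\gamma=M$ is only sketched; it is not clear that an arbitrary $\gamma$-model admits such a lift, and the verification of clause~(1) of Definition~\ref{main-def} at the new levels $\delta\in(\gamma,\gamma']$ is left to an unspecified ``genericity argument in the spirit of Claim~\ref{stability}.'' But these issues are secondary: the approach must be reworked to go downward, using Theorem~\ref{main-thm} to justify that nothing is lost in the interval $(\gamma_p,\gamma]$.
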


\begin{proof} We may assume that $\alpha\in E$. Let $\bar{\mathbb P}_\alpha$
be the set of all conditions $p\in \mathbb P_\alpha$ such that
there is $\beta \in E$ with $\beta >\alpha$ and a $\beta$-model $M_p$ such
that $M_p\rest \alpha \in \M_p$, and $\alpha$ and $\M_p \setminus \{ M_p\rest \alpha\}$ belong to $M_p$.
Note that, by Lemma \ref{adding-M}, $\bar{\mathbb P}_\alpha$ is a dense subset of $\mathbb P_\alpha$.
Now, for each $\gamma\in E \cap (\alpha+1)\cap M_p$ which is either inaccessible or has cofinality $\omega_1$,
let $\gamma_p=\sup(\gamma \cap M_p)$. Then $\gamma_p$ has countable cofinality and, by Lemma \ref{E_A-cofinal},
we know that $\gamma_p \in E$.
Note that, for every such $\gamma$, the model $M_p\rest \alpha$ cannot be  made active at
any ordinal $\eta \in E\cap (\gamma_p,\gamma]$ by a condition  compatible with $p$.
For $\gamma$ inaccessible this follows from the fact that $\mathbb P_\xi$ has size $<{\rm next}(\xi)$, for
all $\xi \in E$. For $\gamma$ of cofinality $\omega_1$ this follows from Theorem \ref{main-thm}.
%implies that this holds for $\gamma$ of cofinality $\omega_1$.
Since all the models in $\M_p$ other than $M_p \rest \alpha$ are elements of $M_p$,
it follows that no model in $\M_p$ can be made active at any such ordinal by any condition
compatible with $p$. Now define a map $\pi_\alpha:\bar{\mathbb P}_\alpha \rightarrow \mathbb P^*_\alpha$
by letting $\pi_{\alpha}(p)=(\M_p^*,w_p)$, where $\M_p^*$ is obtained by replacing
any $\gamma$-model $N\in \M_p$, for $\gamma$ either inaccessible or of cofinality $\omega_1$,
by $N\rest \gamma_p$. By the above argument $\pi_{\alpha}(p)$ and $p$ are equivalent
conditions in $\mathbb P_\alpha$, i.e. any condition $q \in \mathbb P_\alpha$ is compatible
with $\pi_\alpha(p)$ iff it is compatible with $p$. It follows that $\mathbb P_\alpha$
and $\mathbb P^*_\alpha$ are equivalent as forcing notions.
\end{proof}

Now, note that if $\alpha \in E$ is either inaccessible or has cofinality $\omega_1$
then $\mathbb P^*_\alpha$ is the direct limit of the $\mathbb P^*_\xi$, for $\xi \in E\cap \alpha$.
Therefore, if we let $\mathbb P^*_\kappa$ be the union of the $\mathbb P^*_\alpha$, for $\alpha \in E$,
then by a standard argument $\mathbb P^*_\kappa$ satisfies the $\kappa$-chain condition.
By putting all this together we get the final conclusion.

\begin{teo}\label{final} $\mathbb P^*_\kappa$ is semiproper and satisfies the $\kappa$-chain condition.
\qed
\end{teo}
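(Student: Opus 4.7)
The plan is to derive the two conclusions from the preceding material: the chain condition from clauses $(i)$ and $(iv)$ of the construction, and semiproperness from Theorem~\ref{main-thm}.

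For the $\kappa$-chain condition I would run the standard direct-limit-at-inaccessibles argument. By clause $(i)$, $|\mathbb P^*_\alpha|<{\rm next}(\alpha)$ for all $\alpha\in E$. By the remark preceding the theorem, whenever $\alpha\in E$ is inaccessible or of cofinality $\omega_1$, $\mathbb P^*_\alpha$ is the direct limit of $(\mathbb P^*_\xi:\xi\in E\cap\alpha)$; the set of such $\alpha$ is stationary in $\kappa$, since already the $\alpha\in E$ of cofinality $\omega_1$ form a stationary set. Given any alleged $\kappa$-sized antichain $\{p_i:i<\kappa\}\subseteq\mathbb P^*_\kappa$, each $p_i$ is supported in some $\mathbb P^*_{\alpha_i}$ with $\alpha_i<\kappa$; a $\Delta$-system and Fodor-style reflection argument places $\kappa$-many of the $p_i$ inside a single $\mathbb P^*_\alpha$ for such an $\alpha$, and since $|\mathbb P^*_\alpha|<\kappa$ two of the $p_i$ must coincide, a contradiction.

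For semiproperness, let $\theta$ be a sufficiently large regular cardinal, let $M'$ be a countable elementary submodel of $H(\theta)$ with $\mathbb P^*_\kappa\in M'$, and let $p\in M'\cap\mathbb P^*_\kappa$. Put $M=M'\cap V_\kappa$, a countable elementary submodel of $V_\kappa$ with $p\in M$. By elementarity the rank of every element of $M$ lies in $M\cap\kappa$, so for any $\beta\in E$ with $\beta\geq\sup(M\cap\kappa)$ we have $M\subseteq V_\beta$; hence ${\rm Hull}(M,V_\beta)=V_\beta$ is transitive, and since $\beta\in E$ the pair $(V_\beta,\in,U\restriction\beta)$ belongs to $\mathscr A_\beta$, witnessing $M\in\mathscr C_\beta$. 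Fix such a $\beta$ and choose $\alpha\in E$ with $\sup(M'\cap\kappa)\leq\alpha<\beta$ and $p\in\mathbb P^*_\alpha$. Lemma~\ref{adding-M}, applied to the $\beta$-model $M$ and the condition $p\in\mathbb P_\alpha$ (noting $p\in M$), produces $q:=p^M\leq p$ in $\mathbb P_\alpha$ with $M\restriction\alpha\in\M_q$. By Lemma~\ref{density} we may take $q\in\mathbb P^*_\alpha$. Theorem~\ref{main-thm}, applied at $\gamma=\alpha$ with $M$ in the role of the outer $\beta$-model, yields $q\forces_{\mathbb P^*_\alpha}M(\dot G_\alpha)\cap\omega_1=M\cap\omega_1$.

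To lift this to semigenericity over $\mathbb P^*_\kappa$, let $\tau\in M'$ be any $\mathbb P^*_\kappa$-name for a countable ordinal. The $\kappa$-c.c.\ established above together with the elementarity of $M'$ allows replacement of $\tau$ by an equivalent nice name, still in $M'$, which is a $\mathbb P^*_\delta$-name for some $\delta\in M'\cap\kappa$. Since $\delta\leq\sup(M'\cap\kappa)\leq\alpha$, the value of $\tau$ under $\dot G$ depends only on $\dot G_\alpha$, and the chosen name lies in $M'\cap V_\kappa=M$. Hence $q\forces\tau^{\dot G}\in M(\dot G_\alpha)\cap\omega_1=M\cap\omega_1=M'\cap\omega_1$, so $q$ is $(M',\mathbb P^*_\kappa)$-semigeneric. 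The one subtle step is this lift; the essential point is that the $\kappa$-c.c.\ argument above rests only on clauses $(i)$ and $(iv)$ of the construction and not on semiproperness itself, ruling out any circularity.
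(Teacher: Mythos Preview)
Your overall strategy matches the paper's: derive the chain condition from the direct-limit structure at inaccessible and $\omega_1$-cofinal points of $E$, and semiproperness from Theorem~\ref{main-thm} together with Lemma~\ref{adding-M} and the chain condition. The chain-condition argument is fine.

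There is, however, a slip in the semiproperness part. Your computation of ${\rm Hull}(M,V_\beta)$ is incorrect for $\beta>\delta:=\sup(M\cap\kappa)$. Every function $f\in M$ lies in $V_\delta$, so both its domain and its range are contained in some $V_\gamma$ with $\gamma<\delta$; hence ${\rm Hull}(M,V_\beta)=V_\delta$ for \emph{every} $\beta\geq\delta$, not $V_\beta$. Since $V_\delta$ cannot be an end-extension of $V_\beta$ once $\beta>\delta$, the model $M$ is a $\delta$-model but is \emph{not} a $\beta$-model for any $\beta>\delta$. This blocks your choice of $\alpha$ with $\delta\leq\alpha<\beta$: there is no such pair $(\alpha,\beta)$ with $M\in\mathscr C_\beta$.

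The repair is minor and leaves the rest of your outline intact. Note first that $\delta\in E$, since $E$ is closed and (by Lemma~\ref{E_A-cofinal}) $E\cap\delta$ is cofinal in $\delta$. View $p$ as a condition in $\mathbb P_\delta$ and add the $\delta$-model $M$ itself to the side condition, obtaining $q\leq p$ in $\mathbb P_\delta$ with $M\in\M_q$; the proof of Lemma~\ref{adding-M} goes through verbatim in the boundary case $\beta=\alpha=\delta$, since every $\gamma\in\dom(w_p)$ lies in $E\cap\delta\subseteq M$. Now for each $\gamma\in E\cap\delta$ we have $M\restriction\gamma\in\M_{q\restriction\gamma}$, and Theorem~\ref{main-thm} (applied to the $\delta$-model $M$ and $\gamma<\delta$) yields
\[
q\restriction\gamma\ \forces_{\mathbb P_\gamma}\ M(\dot G_\gamma)\cap\omega_1=M\cap\omega_1.
\]
Your final paragraph then applies unchanged: by the $\kappa$-c.c.\ and elementarity of $M'$, any $\tau\in M'$ naming a countable ordinal is equivalent to a $\mathbb P^*_\gamma$-name for some $\gamma\in M'\cap\kappa\subseteq\delta$, so $q$ forces $\tau^{\dot G}\in M\cap\omega_1=M'\cap\omega_1$, and $q$ is $(M',\mathbb P^*_\kappa)$-semigeneric.
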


\bibliographystyle{abbrv}
\bibliography{side}

\end{document}